\newcommand{\Petkovsek} {{Petkov\v{s}ek}}
\newcommand{\bN} { {\mathbb{N}}}
\newcommand{\bZ} { {\mathbb{Z}}}
\newcommand{\bF} { {\mathbb{F}}}
\newcommand{\bE} { {\mathbb{E}}}
\newcommand{\bK} { {\mathbb{K}}}
\newcommand{\bO} { {\mathbb{O}}}
\newcommand{\bW} { {\mathbb{W}}}
\newcommand{\bU} { {\mathbb{U}}}
\newcommand{\bS} { {\mathbb{S}}}
\def\bm#1{\mathchoice{\kern-.5pt\mathord{\text{\bfseries\itshape#1}}\kern+.25pt}
                      {\kern-.5pt\mathord{\text{\bfseries\itshape#1}}\kern+.25pt}
                      {\kern+.25pt\mathord{\text{\scriptsize\bfseries\itshape#1}}\kern+.5pt}
                      {\kern+.25pt\mathord{\text{\tiny\bfseries\itshape#1}}}\kern+.25pt}
\newcommand{\vk} { {\bm k}}
\newcommand{\vt} { {\bm t}}
\newcommand{\vy} { {\bm y}}
\newcommand{\vz} { {\bm z}}
\newcommand{\vu} { {\bm u}}
\newcommand{\vv} { {\bm v}}
\newcommand{\vw} { {\bm w}}
\newcommand{\vnull} { {\bm 0}}
\newcommand{\vD}{{\bm D}}
\newcommand{\vS}{{\bm S}}
\newcommand{\si} { {\sigma}}
\newcommand{\pa}{\partial}
\def\Wt{\bW_{\!\vt}}
\def\Wtz{\bW_{\!\vt,\vz}}
\def\Ot{\bO_{\vt}}
\def\Otz{\bO_{\vt,\vz}}
\def\Dt{\vD_{\!\vt}}
\def\Dy{\vD_{\!\vy}}
\def\Dz{\vD_{\!\vz}}
\def\Sk{\vS_{\!\vk}}
\def\eqa{\stackrel{\mathrm{alg}}{=}}
\def\neqa{\,\not\!\stackrel{\mathrm{alg}}{=}}
\newtheorem{theorem}{Theorem}
\newtheorem{prop}[theorem]{Proposition}
\newtheorem{remark}[theorem]{Remark}
\newtheorem{define}[theorem]{Definition}
\newtheorem{conj}[theorem]{Conjecture}
\journal{xxxxxx}
\begin{document}

\begin{frontmatter}



\title{Proof of the Wilf--Zeilberger Conjecture for \\Mixed Hypergeometric Terms\footnote{S.\ Chen was supported by the NSFC grants 11501552, 11688101 and
by the Frontier Key Project (QYZDJ-SSW-SYS022) and the Fund of the Youth Innovation Promotion Association, CAS. C.\ Koutschan was supported by the Austrian Science Fund (FWF): W1214 and P29467-N32.
This work was also supported by the Fields Institute's 2015 Thematic Program on Computer Algebra in Toronto, Canada.}
}

\author[a,b]{Shaoshi Chen}
\address[a]{%
  KLMM, Academy of Mathematics and Systems Science, \\
  Chinese Academy of Sciences, Beijing 100190, China}
\address[b]{%
  School of Mathematical Sciences, \\
  University of Chinese Academy of Sciences, Beijing 100049, China}
\ead{schen@amss.ac.cn}

\author[c]{Christoph Koutschan}
\address[c]{Johann Radon Institute for Computational and Applied Mathematics (RICAM),\\
  Austrian Academy of Sciences, Altenberger Stra\ss e 69, 4040 Linz, Austria}
\ead{christoph.koutschan@ricam.oeaw.ac.at}

\address{{\bf Dedicated to Professor Peter Paule on the occasion of his 60th birthday}}

\begin{abstract}
In 1992, Wilf and Zeilberger conjectured that a hypergeometric term in several
discrete and continuous variables is holonomic if and only if it is proper.
Strictly speaking the conjecture does not hold, but it is true when
reformulated properly: Payne proved a piecewise interpretation in 1997, and
independently, Abramov and Petkov\v{s}ek in 2002 proved a conjugate
interpretation. Both results address the pure discrete case of the conjecture.
In this paper we extend their work to hypergeometric terms in several discrete
and continuous variables and prove the conjugate interpretation of the
Wilf--Zeilberger conjecture in this mixed setting.
\end{abstract}

\begin{keyword}
Wilf--Zeilberger Conjecture \sep
hypergeometric term \sep
properness \sep
holonomic function \sep
D-finite function \sep
Ore-Sato Theorem
\end{keyword}

\end{frontmatter}


\section{Introduction}\label{SECT:intro}

The method of creative telescoping was put on an algorithmic fundament by
Zeilberger~\cite{Zeilberger1990, Zeilberger1991} in the early 1990's, and it
has been a powerful tool in the study of special function identities since
then. Zeilberger's algorithms (for binomial / hypergeometric sums and for
hyperexponential integrals) terminate for \emph{holonomic} inputs. The
holonomicity of functions is defined in terms of the dimension of their
annihilating ideals in algebras of operators with polynomial coefficients;
in general, it is difficult to detect this property. In
1992, Wilf and Zeilberger~\cite{Wilf1992} gave a more elegant and constructive
proof that their methods are applicable to so-called \emph{proper
  hypergeometric} terms, which are expressed in an explicit form. Since all
examples considered in their paper are both proper and holonomic, Wilf and
Zeilberger then presented the following conjecture in~\cite[p.~585]{Wilf1992}.
\begin{conj}[Wilf and Zeilberger, 1992]\label{CONJ:wz}
A hypergeometric term is holonomic if and only if it is proper.
\end{conj}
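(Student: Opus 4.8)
The conjecture fails as literally stated: already for one discrete variable, the term $T$ with $T(n+1)/T(n)=n^{2}+1$ is holonomic --- it obeys a first-order recurrence with polynomial coefficients --- but is not proper over $\bQ$, since $n^{2}+1$ does not split into integer-linear factors over $\bQ$; it does become proper over $\bQ(i)$, where $n^{2}+1=(n+i)(n-i)$. The plan is therefore to prove the \emph{conjugate} reformulation in the mixed setting: a hypergeometric term $T$ in continuous variables $\vx=(x_{1},\dots,x_{p})$ and discrete variables $\vk=(k_{1},\dots,k_{q})$ over a field $\bK$ of characteristic $0$ --- by which we mean that $T^{-1}\,\partial T/\partial x_{j}$ and $T^{-1}\,(S_{k_{i}}T)$ all lie in $\bK(\vx,\vk)$ --- is holonomic (the left ideal annihilating $T$ in $\bK[\vx,\vk]\langle\partial_{x_{1}},\dots,\partial_{x_{p}},S_{k_{1}},\dots,S_{k_{q}}\rangle$ has Bernstein dimension $p+q$) if and only if it is \emph{conjugate to a proper term}, that is, becomes proper over a suitable finite extension of $\bK$. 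This specializes to the theorem of Abramov and \Petkovsek{} when $\vx$ is absent; since holonomicity is preserved by extension of the ground field, only the ``$\Leftarrow$'' implication is sensitive to $\bK$, which is exactly why ``conjugate'' is needed.

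The first step is a mixed Ore--Sato normal form: I would show that every such $T$ factors as
\[
  T(\vx,\vk)=R(\vx,\vk)\cdot e^{E(\vx)}\cdot\prod_{i}\ell_{i}(\vx,\vk)^{c_{i}}\cdot\prod_{j}u_{j}(\vx)^{k_{j}}\cdot\prod_{m}\Gamma\bigl(\lambda_{m}(\vk)\bigr)^{\varepsilon_{m}},
\]
where $R$, $E$, $u_{j}$ are rational over $\bK$ (with $E$ and $u_{j}$ depending on $\vx$ only), the $\ell_{i}$ are affine-linear in $(\vx,\vk)$, the $\lambda_{m}$ are affine-linear in $\vk$ with integer $\vk$-coefficients, $c_{i}\in\bK$, and $\varepsilon_{m}\in\{\pm1\}$. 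This comes from applying the classical discrete Ore--Sato theorem in the $\vk$-directions --- which forces the $\vk$-arguments of the $\Gamma$-factors to be integer-linear, since $\Gamma(s+1)/\Gamma(s)=s$ is rational in $s$ only for integer shifts --- then resolving the residual $\vx$-dependence through partial-fraction integration of the hyperexponential certificates, and finally imposing the integrability (compatibility) conditions that the shift and derivative certificates of a single term must jointly satisfy; those conditions couple the $\vx$- and $\vk$-parts only through the rational factor $R$.

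The core of the proof is to reduce holonomicity to a condition on $R$ alone. The ``skeleton'' $T_{0}=e^{E}\prod_{i}\ell_{i}^{c_{i}}\prod_{j}u_{j}^{k_{j}}\prod_{m}\Gamma(\lambda_{m})^{\varepsilon_{m}}$ is always simultaneously holonomic and proper: any power $\ell_{i}^{c_{i}}$ whose base $\ell_{i}$ genuinely involves $\vk$ is forced to have $c_{i}\in\bZ$ and can be moved into $R$, while differentiating in $\vx$ never displaces a singular locus, so the $\vx$-factors contribute no obstruction. I would then prove that $T$ is holonomic if and only if every $\overline{\bK}$-irreducible factor of the denominator of $R$ is affine-linear in $(\vx,\vk)$ with a rational $\vk$-coefficient vector. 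Granting this, the ``$\Leftarrow$'' direction follows by re-absorbing each denominator factor of $R$ into the $\Gamma$-part through $\Gamma(\lambda)/\Gamma(\lambda+1)=\lambda^{-1}$, or into the power part, until what remains has proper shape; the other half --- that a proper term is holonomic --- is obtained by lifting the original Wilf--Zeilberger argument \cite{Wilf1992} from one summation variable to the mixed setting, where a proper term satisfies, for each $\partial_{x_{j}}$ and each $S_{k_{i}}$, a first-order operator relation read off from its $\Gamma$-, exponential- and power-structure, and a Sylvester-type count on that operator system bounds the Bernstein dimension by $p+q$.

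The main obstacle I expect is the forward implication of the characterization: that a ``bad'' irreducible factor of $\mathrm{denom}(R)$ --- one that is non-linear, or linear but with an irrational $\vk$-coefficient --- forces non-holonomicity. In the purely discrete case Abramov and \Petkovsek{} establish this by tracking how the arrangement of singular hyperplanes of $R$ proliferates under iterated $\vk$-shifts: an integer-linear factor produces only finitely many pairwise non-resonant translates, a bad one produces infinitely many, and the characteristic variety then acquires a component of dimension exceeding $p+q$. In the mixed case this must be fused with the fact that $\partial_{\vx}$ moves no singular locus, and --- the delicate point --- one must preclude any cancellation in which shifting in $\vk$ is traded off against differentiating in $\vx$, even though the operators $S_{k_{i}}$ and $\partial_{x_{j}}$ genuinely act together on $T$. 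The mixed Ore--Sato normal form is arranged precisely so that this can be verified one irreducible factor of $\mathrm{denom}(R)$ at a time, thereby reducing the mixed estimate to the known discrete pole-proliferation estimate together with a separation argument for the factors in which $\vx$ and $\vk$ truly mix.
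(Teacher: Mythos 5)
Your overall architecture (normal form for the term via an Ore--Sato-type structure theorem, reduction to the rational factor, then a characterization of which rational factors are holonomic) parallels the paper, but the load-bearing step is stated incorrectly and the proof would collapse there. The criterion you propose for $R$ --- holonomic iff every irreducible denominator factor is affine-linear in $(\vx,\vk)$ with rational $\vk$-coefficients --- is false in both directions. A denominator factor free of $\vk$ may be an arbitrary irreducible polynomial in the continuous variables (e.g.\ $1/(t_1^2+t_2)$ is D-finite, hence holonomic, and of proper shape as $g^{-1}$ with $g\in\bF(\vt)$), while a factor genuinely mixing the two kinds of variables, such as $t_1+k_1$, must be \emph{excluded}: $\sum_{k\geq 0} z^k/(t_1+k)$ is not D-finite in $(t_1,z)$, so $1/(t_1+k_1)$ is not holonomic, and it is not conjugate-proper either, since $\Gamma(t_1+k_1)$ is not a legal factor of a proper term (its $t_1$-certificate would be the digamma function, which is not rational, and the factorial part of a proper term has arguments $\alpha+\vv\cdot\vk$ with constant $\alpha$). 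The correct statement is Proposition~\ref{PROP:rat}: the denominator must split as $g(\vt)\prod_i(\alpha_i+\vv_i\cdot\vk)$ with $\vv_i\in\bZ^n$, i.e.\ \emph{no} mixed factors at all. Ruling out mixed factors is exactly the ``delicate trade-off between shifting in $\vk$ and differentiating in $\vx$'' that you defer to an unspecified separation argument; the paper does concrete work here, passing to the generating function, using Lipshitz's elimination lemma to produce an annihilating operator involving only $D_1,S_1$ and the remaining discrete variables as parameters (Proposition~\ref{PROP:elim}), and then deriving a contradiction from an uncancellable pole at $S_1^j(d)^{\,i+1}$. Your plan, as written, would instead absorb mixed linear factors into $\Gamma$-factors via $\Gamma(\lambda)/\Gamma(\lambda+1)=\lambda^{-1}$ and certify $1/(t_1+k_1)$ as holonomic and conjugate-proper, both of which are false.

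Two further discrepancies matter. Your gloss of ``conjugate'' as ``proper after a finite extension of the ground field'' is not the Abramov--\Petkovsek{} notion the corrected conjecture uses: conjugate means having the same rational certificates, i.e.\ being solutions of a common nontrivial first-order system. The literal failure of Conjecture~\ref{CONJ:wz} is not a field-of-definition issue (the paper works over an algebraically closed $\bF$, where your $n^2+1$ example is already proper); it is the sequence phenomenon exemplified by $|k_1-k_2|$, which is holonomic yet equal to no proper term over any field extension, while sharing certificates with the proper term $k_1-k_2$. Finally, defining mixed holonomicity by ``Bernstein dimension $p+q$'' in $\bK[\vx,\vk]\langle\partial_{\vx},S_{\vk}\rangle$ is not viable, because Bernstein's inequality fails in shift algebras, so dimension $p+q$ is not a minimality condition and does not single out the intended class; the paper instead defines holonomicity through the generating function (Definition~\ref{DEF:holonomic}), and both implications --- including your proposed Sylvester-type count for ``proper $\Rightarrow$ holonomic'' --- have to be established relative to that definition, which the paper does via closure properties and Lipshitz's diagonal theorem rather than by lifting the operator-counting argument of Wilf and Zeilberger.
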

It was observed in \cite{Payne1997,AbramovPetkovsek2002a} that the conjecture
is not true when it is taken literally, so it needs to be modified in order to
be correct (see Theorem~\ref{THM:main}).  For example, the term $|k_1-k_2|$
defining a sequence over~$\bN^2$ is easily seen to be hypergeometric (since it
satisfies the first-order system below) and holonomic because its generating
function
\[
  \sum_{k_1=0}^\infty \sum_{k_2=0}^\infty |k_1-k_2| z_1^{k_1} z_2^{k_2} =
  \frac{z_1^2z_2 + z_1z_2^2 - 4z_1z_2 + z_1 + z_2}{(1-z_1)^2(1-z_2)^2(1-z_1z_2)}
\]
is a rational function (see Definitions~\ref{DEF:fgf} and~\ref{DEF:holonomic},
and Theorem~\ref{THM:kashi}). But $|k_1-k_2|$ is not
proper~\cite[p.~396]{AbramovPetkovsek2002a}; a similar counter-example was
given in~\cite[p.~55]{Payne1997}.  Payne in his 1997
Ph.D. dissertation~\cite[Chap.~4]{Payne1997} modified and proved the
conjecture in a piecewise sense; more specifically, it was shown that the
domain of a holonomic hypergeometric term can be expressed as the union of a
linear algebraic set and a finite number of convex polyhedral regions (the
``pieces'') such that the term is proper on each region. In the case of $|k_1
- k_2|$, the linear algebraic set is the line $k_1=k_2$ and the polyhedral
regions are $k_1 - k_2 > 0$ and $k_1 - k_2 < 0$ where the proper terms are
$k_1 - k_2$ and $k_2 - k_1$ respectively.  Unaware of~\cite{Payne1997},
Abramov and \Petkovsek~\cite{AbramovPetkovsek2002a} in 2002 solved the problem
by showing that a holonomic hypergeometric term is conjugate to a proper term,
which means roughly that both terms are solutions to a common (nontrivial)
system of equations.  The holonomic term $|k_1 - k_2|$ and the proper term
$k_1 - k_2$ are easily seen to be solutions of the system
\begin{align*}
  (k_1 - k_2)T(k_1 + 1, k_2) - (k_1 - k_2 + 1)T(k_1, k_2) &=0, \\
  (k_1 - k_2)T(k_1, k_2 + 1) - (k_1 - k_2 - 1)T(k_1, k_2) & =0.
\end{align*}

The special case of two variables has also been shown by
Hou~\cite{Hou2001, Hou2004} and by Abramov and Petkov\v{s}ek~\cite{AbramovPetkovsek2001}.
In this paper, we consider the general mixed case, but only the conjugate interpretation.
For the sake of simplicity, we regard hypergeometric terms as literal functions
only of the discrete variables and interpret their values as elements of a differential field.
We define exponentiation of these field elements only in a formal
sense, similar to what is done in symbolic integration, see Remark~\ref{remark:exponentiation}.

If Conjecture~\ref{CONJ:wz} above were verified, then one could algorithmically detect the
holonomicity of hypergeometric terms by checking properness with the
algorithms in~\cite{AbramovPetkovsek2002a, Chen2011}.
This is important because it gives a simple test for
the termination of Zeilberger's algorithm. In the bivariate case,
several termination criteria are developed
in~\cite{Abramov2003, Chen2005, Chen2015}.

\section{Mixed hypergeometric terms}\label{SECT:hf}
Hypergeometric terms play a prominent role in combinatorics; and also a
large class of special functions used in mathematics and physics can be defined
in terms of them, namely as hypergeometric series.  Wilf and Zeilberger
in~\cite{WilfZeilberger1990a, WilfZeilberger1990b, Wilf1992} developed an
algorithmic proof theory for identities involving hypergeometric terms.

Throughout the paper, we let $\bF$ denote an algebraically closed field of characteristic zero, and
$\vt =(t_1,\ldots,t_m)$ and $\vk=(k_1,\ldots, k_n)$ be two sets of variables;
we view~$\vt$ and $\vk$ as continuous and discrete variables,
respectively.  Note that bold symbols are used for vectors and that
$\vu\cdot\vv=u_1v_1+\dots+u_nv_n$ denotes their inner product.  For an
element~$f\in \bF(\vt, \vk)$, define
\[
  D_i(f)=\frac{\pa f}{\pa t_i} \quad \text{and} \quad
  S_j(f(\vt, \vk)) = f(\vt, k_1, \ldots, k_{j-1}, \, k_j+1, \, k_{j+1}, \ldots, k_n)
\]
for all~$i, j$ with~$1 \le i \le m$ and~$1 \le j \le n$. The
operators~$D_i$ and~$S_j$ are called \emph{derivations} and
\emph{shifts}, respectively.  The operators
$D_1, \ldots, D_m, S_1, \ldots, S_n$ commute pairwise on~$\bF(\vt, \vk)$.

The field~$\bF(\vt)$ becomes a differential field~\cite[p.~58]{Kolchin1973}
with the derivations $D_1, \ldots, D_m$.  Let~$\bU$ be a
universal differential extension of~$\bF(\vt)$, in which all consistent systems of algebraic
differential equations with coefficients in~$\bF(\vt)$ have solutions and the
extended derivations~$D_1, \ldots, D_m$ commute in~$\bU$. For the existence of
such a universal field, see Kolchin's book~\cite[p.~134, Theorem
2]{Kolchin1973}. By a multivariate sequence over~$\bU$, we mean a map~$H\colon
\bN^n \rightarrow \bU$; instead of $H(\vk)$ we will often write
$H(\vt,\vk)$ in order to emphasize also the dependence on~$\vt$. Let~$\bS$
be the set of all multivariate sequences over~$\bU$.
We define the addition and multiplication of two elements of~$\bS$
coordinatewise, so that the invertible elements in~$\bS$ are those sequences whose
entries are all invertible in~$\bU$. The shifts $S_j$ operate on sequences in an
obvious way, and the derivations on~$\bU$ are extended to~$\bS$ coordinatewise.

A polynomial $p\in\bF[\vt, \vk]$ can be viewed as an element of~$\bS$ in a
natural way: for each $\vv\in\bN^n$ the entry of the sequence is given by the
value $p(\vt,\vv)$. However, in order to embed the field of rational
functions~$\bF(\vt, \vk)$ into~$\bS$, we need the following equivalence
relation among multivariate sequences, used in~\cite{AbramovPetkovsek2002a}.
\begin{define}[Equality modulo an algebraic set] \label{DEF:equivalence}
Two multivariate sequences $H_1(\vt,\vk)$ and~$H_2(\vt,\vk)$ are said to be
\emph{equal modulo an algebraic set}, denoted by~$H_1 \eqa H_2$,
if there is a nonzero polynomial~$p\in \bF[k_1, \dots, k_n]$ such that
\[
  \{{\vv}\in \bN^n\mid H_1(\vt,\vv) \neq H_2(\vt,\vv)\} \subseteq \{\vv\in \bF^n \mid p(\vv)=0\}.
\]
A multivariate sequence~$H$ is \emph{nontrivial} if~$H\neqa 0$.
\end{define}

Equality modulo an algebraic set is not only an equivalence relation in~$\bS$,
but also a congruence~\cite{AbramovPetkovsek2002a}, i.e., $H_1 + H_2 \eqa H_1'
+ H_2'$ and~$H_1H_2 \eqa H_1'H_2'$ if~$H_1 \eqa H_1'$ and~$H_2 \eqa H_2'$.
Now, to every rational function we can associate a sequence in~$\bS$.
\begin{define}\label{DEF:RationalSequence}
Let $f=p/q$ with $p,q\in \bF[\vt,\vk]$ be a rational function. Then we
define the \emph{rational sequence}~$F(\vt,\vk)\in\bS$
corresponding to~$f(\vt,\vk)$ as follows: For each $\vv\in\bN^n$
\[
  F(\vt,\vv) = \begin{cases}
    f(\vt,\vv), & \text{if } q(\vt,\vv) \neq 0, \\
    0, & \text{otherwise.}
  \end{cases}
\]
\end{define}

\begin{define}[Mixed hypergeometric term] \label{DEF:hf}
A multivariate sequence~$H(\vt,\vk)\in \bS$ is said to be a \emph{(mixed) hypergeometric term} over~$\bF(\vt, \vk)$ if
there are polynomials~$p_i, q_i\in \bF[\vt, \vk]$ with~$q_i\neq 0$ for $i=1, \ldots, m$ and
$u_j, v_j\in \bF[\vt, \vk]\setminus \{0\}$ for $j=1, \ldots, n$
such that
\[
  q_i D_i(H) = p_i H \quad\text{and}\quad v_j S_j(H) = u_j H.
\]
\end{define}
Strictly speaking, such sequences should be called \emph{hypergeometric-hyper\-expo\-nential},
but for the sake of brevity we call them just \emph{hypergeometric}.
Let~$a_i(\vt,\vk)= p_i/q_i$ and~$b_j(\vt,\vk) = u_j/v_j$ with~$p_i, q_i, u_j, v_j$
as in the above definition. Then we can write
\[
  D_i(H) \eqa a_i H \quad\text{and}\quad S_j(H) \eqa b_j H.
\]
We call the rational functions~$a_i$ and~$b_j$ the~\emph{certificates}
of~$H$. The certificates of a hypergeometric term are not arbitrary
rational functions. They satisfy certain compatibility conditions. The
following definition is a continuous-discrete extension of the one introduced
in~\cite{AbramovPetkovsek2002a}.

\begin{define}[Compatible rational functions] \label{DEF:crf}
We call the rational functions $a_1, \ldots, a_m\in \bF(\vt, \vk)$, $b_1, \ldots, b_n \in \bF(\vt, \vk)\setminus\{0\}$
\emph{compatible} with respect to~$\{D_1, \ldots, D_m, S_1, \ldots, S_n\}$
if the following three groups of conditions hold:
\begin{alignat}{3}
D_i(a_j) & =D_j(a_i), & \quad & 1 \le i < j \le m, \label{EQ:dd} \\
\frac{S_i(b_j)}{b_j} & = \frac{S_j(b_i)}{b_i}, &&  1 \le i < j \le n, \label{EQ:ss} \\
\frac{D_i(b_j)}{b_j} & = S_j(a_i) - a_i, && 1 \le i \le m \text{ and } 1 \le j \le n.\label{EQ:ds}
\end{alignat}
\end{define}
\begin{remark}
Let~$H\in \bS$ be a nontrivial hypergeometric term over~$\bF(\vt,\vk)$.
By the same argument as in the proof of~\cite[Prop.~4]{AbramovPetkovsek2002a},
we have that the certificates of~$H$ are unique (if we take the reduced
form of rational functions) and compatible with respect to~$\{D_1, \ldots, D_m, S_1, \ldots, S_n\}$.
\end{remark}

\section{Structure of compatible rational functions}\label{SEC:scrf}
The structure of rational solutions of the recurrence equation
\[
  F_1(k_1, k_2+1)F_2(k_1, k_2) = F_1(k_1, k_2)F_2(k_1+1, k_2)
\]
has been described by Ore~\cite{Ore1930}. Note that Equation~\eqref{EQ:ss} is of this form.
The multivariate extension of Ore's theorem
was obtained by Sato~\cite{Sato1990} in the 1960s. The proofs of the piecewise and conjugate interpretations
of the discrete case of
Wilf and Zeilberger's conjecture were based on the Ore--Sato theorem~\cite{Payne1997, AbramovPetkovsek2002a}.
In his thesis~\cite{ChenThesis}, the first-named author extended the Ore--Sato theorem to the multivariate
continuous-discrete case. More recently, this result has been extended further to the case in which
also $q$-shifted variables appear~\cite{Chen2011}. To present this extension, let us recall some notation and
terminologies from~\cite{Payne1997}. For~$s, t\in \bZ$ and a sequence of expressions~{$\alpha_i$}
with~$i\in \bZ$, define
\begin{equation}\label{EQ:prod}
  \sideset{}{_s^t}\prod_i \alpha_i=\left\{
  \begin{array}{ll}
    \prod_{i=s}^{t-1} \alpha_i, & \text{if } t\geq s; \\[0.5em]
    \prod_{i=t}^{s-1} \alpha_i^{-1}, & \text{if } t<s.
  \end{array}
  \right.
\end{equation}
{We recall the Ore--Sato theorem, following the presentation of Payne's dissertation
\cite[Thm.~2.8.4]{Payne1997}.
For the proof of this theorem, one can also see \cite[pp.~6--33]{Sato1990},
\cite[Thm.~10]{AbramovPetkovsek2002a} or \cite[pp.~9--13]{Gelfand}.

\begin{theorem}[Ore--Sato theorem]\label{TH:mhg}
Let~$b_1, \ldots, b_n\in \bF(\vk)$ be nonzero compatible rational functions, i.e.,
\[
  {b_i}{S_i(b_j)}={b_j}{S_j(b_i)}, \quad \text{for~$1\leq i<j\leq n$}.
\]
Then there exist a rational function~$f\in \bF(\vk)$, constants $\mu_1,\dots,\mu_n\in \bF$,
a finite set $V\subset \bZ^n$, and for each $\vv=(v_1, \ldots, v_n) \in V$
a univariate monic rational function~$r_{\vv}\in \bF(z)$ such that
\[
  b_j = \frac{{S_j}(f)}{f} \mu_j \prod_{\vv\in V} \sideset{}{_0^{v_j}}
  \prod_{\ell} r_{\vv}( \vv \cdot \vk + \ell) \quad \text{for $j=1,\ldots ,n$}.
\]
\end{theorem}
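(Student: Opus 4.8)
The plan is to extract the structure from the irreducible factorization of the $b_j$ over the unique factorization domain $\bF[\vk]$, using the compatibility relation $b_iS_i(b_j)=b_jS_j(b_i)$ to restrict which irreducible factors can occur. First I would write $b_j=\mu_j^{(0)}\prod_p p^{\,e_{p,j}}$, the product over monic irreducibles $p\in\bF[\vk]$ with $e_{p,j}\in\bZ$ almost all zero and $\mu_j^{(0)}\in\bF\setminus\{0\}$. Comparing the order at a fixed $p$ on both sides of the relation gives, for all $p$ and all $i<j$,
\[
  e_{p,i}+e_{S_i^{-1}p,\,j}=e_{p,j}+e_{S_j^{-1}p,\,i}.
\]
The shift group $\bZ^n$ acts on monic irreducibles by $\vw\colon p(\vk)\mapsto p(\vk+\vw)$; on each orbit (a \emph{shift class}) the displayed identity says that, up to reindexing, the exponent data $(e_{\cdot,1},\dots,e_{\cdot,n})$ is a closed discrete $1$-form. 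The scalars $\mu_j^{(0)}$ turn out to be unconstrained and will become the constants $\mu_j$ of the statement (morally the certificates of an exponential factor $\prod_j\mu_j^{k_j}$).

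Next I would split this $1$-form, one shift class at a time, into a coboundary plus a residue. On a shift class of dimension $\ge2$ (stabilizer of rank $\le n-2$; in particular a free class, which is $\bZ^n$ itself), the complement of every finite set is connected, so any closed $1$-form of finite support is the coboundary of a \emph{finitely supported} potential $h$; setting $f_0=\prod_p p^{\,h(p)}\in\bF[\vk]$, the corresponding portion of $\prod_p p^{\,e_{p,j}}$ equals $S_j(f_0)/f_0$. The remaining shift classes have stabilizer of rank exactly $n-1$, hence are one-dimensional. Such a class is the orbit of an irreducible $p$ invariant under an $(n-1)$-dimensional lattice of shifts, so $p$ depends only on a single primitive linear form $\vv\cdot\vk$; algebraic closedness of $\bF$ then forces $p=\vv\cdot\vk-\alpha$ for some $\alpha\in\bF$. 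On such a class the $1$-form again splits off a finite coboundary plus a ``pure flux'' residue, and since $S_j$ translates $\vv\cdot\vk$ by $v_j$, a direct computation shows that the flux residue contributes to $b_j$ precisely a factor $\sideset{}{_0^{v_j}}\prod_\ell r_{\vv}(\vv\cdot\vk+\ell)$, where $r_{\vv}$ is a monic rational function of one variable whose zeros and poles, with multiplicities, record the fluxes of the finitely many base points $\alpha$ in direction $\vv$.

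Assembling the pieces, I would choose one representative from each antipodal pair of the occurring primitive directions to form the finite set $V\subset\bZ^n$, collect all the rational coboundaries (from the higher-dimensional classes and from the one-dimensional ones) into a single $f\in\bF(\vk)$, and absorb the leading coefficients of the $r_{\vv}$ into the scalars $\mu_j$; the result is exactly $b_j=(S_jf/f)\,\mu_j\prod_{\vv\in V}\sideset{}{_0^{v_j}}\prod_\ell r_{\vv}(\vv\cdot\vk+\ell)$. A final, routine verification that any tuple of this form is compatible shows that no generality is lost.

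The main obstacle is the classification of shift classes: proving that whenever the exponent $1$-form on a class is not a finitely supported coboundary the class must be the $\bZ$-orbit of a linear form $\vv\cdot\vk-\alpha$ — this is where unique factorization, the connectedness at infinity of $\bZ^r$ for $r\ge2$, and the algebraic closedness of $\bF$ all enter — and then matching the telescoped products with the precise normalization (monic $r_{\vv}$, the up/down product convention, one $r_{\vv}$ per direction). This is in essence the Ore--Sato theorem, so I would ultimately be reproving the result of \cite{Sato1990} (see also \cite{AbramovPetkovsek2002a, Gelfand}); an alternative is induction on $n$ from Ore's bivariate theorem by singling out $k_n$ over $\bF(k_1,\dots,k_{n-1})$, but that needs a descent argument to keep $f$ rational over $\bF(\vk)$, so I would favor the global orbit argument.
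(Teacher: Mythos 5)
The paper does not actually prove Theorem~\ref{TH:mhg}: it is quoted from the literature, with the proof delegated to \cite[pp.~6--33]{Sato1990}, \cite[Thm.~10]{AbramovPetkovsek2002a}, \cite[Thm.~2.8.4]{Payne1997} and \cite[pp.~9--13]{Gelfand}. Your outline is essentially a reconstruction of that standard argument (closest in spirit to Abramov--Petkov\v{s}ek's and Payne's proofs): compare $p$-adic valuations in the compatibility relation to get the discrete closed-$1$-form condition $e_{p,i}+e_{S_i^{-1}p,\,j}=e_{p,j}+e_{S_j^{-1}p,\,i}$ on each shift orbit of monic irreducibles, kill the form on orbits of rank at least two by a finitely supported potential (exactness plus connectedness at infinity of $\bZ^r$, $r\ge 2$), observe that rank-one orbits consist of irreducibles constant along a corank-one saturated sublattice, hence of the form $\vv\cdot\vk-\alpha$ by algebraic closedness of $\bF$, and let the leftover flux on those orbits produce the factors $\sideset{}{_0^{v_j}}\prod_\ell r_{\vv}(\vv\cdot\vk+\ell)$, with the unconstrained scalars giving the $\mu_j$. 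This is sound as a plan; the one place where the real work is hidden is the phrase ``a direct computation shows'' for the one-dimensional orbits: splitting the closed form on a $\bZ$-orbit into a finitely supported coboundary plus a residue that is \emph{independent of the direction $j$} (so that a single $r_{\vv}$ serves all $b_j$, with the $\sideset{}{_0^{v_j}}\prod$ convention handling negative $v_j$), and checking that finitely many orbits and directions occur, is precisely the bookkeeping that constitutes the Ore--Sato theorem, so a complete write-up would still have to carry that out (or cite it), as the paper does.
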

The continuous analogue of the Ore--Sato theorem was first obtained by
Christopher~\cite{Christopher1999} for bivariate compatible rational functions
and later extended by {\.Z}o{\l}{a}dek~\cite{Zoladek1998} to the multivariate
case using the Integration Theorem of \cite[p.~5]{Cerveau1982}.
We offer a more algebraic proof using only some basic
properties of multivariate rational functions.

\begin{theorem}[Multivariate Christopher's theorem]\label{TH:mhe}
Let~$a_1, \ldots, a_m \in \bF(\vt)$ be rational functions such that
\[
  D_i(a_j) = D_j(a_i), \quad \text{for~$1 \leq i < j \leq m$.}
\]
Then there exist rational functions~$g_0,\dots,g_L \in \bF(\vt)$
and constants~$\gamma_1,\dots,\gamma_L \in \bF$ such that
\[
  a_i = D_i({ g_0}) + \sum_{\ell=1}^L \gamma_{\ell} { \frac{D_i(g_{\ell})}{g_{\ell}}}
  \quad \text{for~$i=1, \ldots, m$.}
\]
\end{theorem}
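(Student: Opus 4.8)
The statement is the rational (multivariate) analogue of the classical fact that a closed $1$-form with rational coefficients is, up to an exact part, a $\mathbb{F}$-linear combination of logarithmic derivatives $D_i(g_\ell)/g_\ell$. So the natural strategy is to mimic the one-variable partial-fraction proof and then patch across variables. First I would set up notation: write each $a_i$ in partial-fraction form with respect to one distinguished variable, say $t_1$, over the field $\mathbb{F}(t_2,\dots,t_m)$, separating the polynomial part from the proper-fraction part whose denominators are powers of irreducible polynomials $P$ (irreducible over $\mathbb{F}(t_2,\dots,t_m)[t_1]$, equivalently over $\mathbb{F}[\vt]$ after clearing). The integrability conditions $D_i(a_j)=D_j(a_i)$ force strong constraints on these pieces.

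**Key steps, in order.** (1) \emph{Reduce the pole order.} For a fixed irreducible $P\in\mathbb{F}[\vt]$, look at the highest-order pole of the $a_i$ along $P$. The compatibility relations say the vector of leading Laurent coefficients is itself closed in the appropriate sense; using that $D_i(P)$ are not all zero and a Euclidean-type argument, show that any pole of order $\ge 2$ can be removed by subtracting $D_i(h)$ for a suitable $h\in\mathbb{F}(\vt)$ with a pole of order one less along $P$ (this is the rational analogue of "no residues at higher order"). Iterating, I reduce to the case where every $a_i$ has only simple poles. (2) \emph{Identify the logarithmic parts.} With only simple poles along each irreducible $P$, the residue of $a_i$ along $P$ — call it $c_{i,P}\in\mathbb{F}(\vt)/(P)$ — must, by the integrability conditions, be a \emph{constant} in $\mathbb{F}$, the same pattern that makes $\sum_P c_P\, D_i(P)/P$ have the right derivative; here $g_\ell$ runs over the relevant irreducible factors $P$ and $\gamma_\ell=c_P$. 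Subtracting $\sum_P \gamma_P D_i(P)/P$ kills all poles. (3) \emph{The polynomial remainder.} What is left is a tuple of \emph{polynomials} $\tilde a_i\in\mathbb{F}[\vt]$ still satisfying $D_i(\tilde a_j)=D_j(\tilde a_i)$; by the polynomial Poincaré lemma this is $D_i(g_0)$ for a single polynomial $g_0$. Collecting, $a_i=D_i(g_0)+\sum_\ell \gamma_\ell D_i(g_\ell)/g_\ell$, as claimed, with the $g_\ell$ (for $\ell\ge1$) the irreducible polynomials carrying simple poles.

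**Main obstacle.** The delicate point is step (1)–(2): proving that the residue data is \emph{$\mathbb{F}$-valued and constant}, and that higher-order poles are genuinely exact. In one variable this is immediate from $\int dt/(t-\alpha)^k$; in several variables one must argue along the hypersurface $\mathcal{V}(P)$, where "taking residues" means reducing modulo $P$ and one has to control how the other derivations act on $\mathbb{F}[\vt]/(P)$ — in particular, ruling out the possibility that a non-constant element of $\mathbb{F}(\vt)/(P)$ appears as a residue, which would have no logarithmic antiderivative. I expect to handle this by choosing the distinguished variable so that $P$ is monic in $t_1$, performing the residue computation at the generic point of $\mathcal{V}(P)$, and invoking the compatibility relations restricted to $\mathcal{V}(P)$ to force the residue to be annihilated by all the induced derivations, hence to lie in $\mathbb{F}$. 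The rest — partial fractions, the polynomial Poincaré lemma, bookkeeping of finitely many irreducible factors — is routine, and the $L$ in the statement is just the number of distinct irreducible polynomials that end up with nonzero residue.
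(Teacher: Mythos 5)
Your route is genuinely different from the paper's: you propose a global, divisor\nobreakdash-by\nobreakdash-divisor Hermite-type reduction (lower all pole orders to one, show the residue along each irreducible hypersurface is a constant in $\bF$, subtract the logarithmic parts, and finish with a polynomial Poincar\'e lemma), whereas the paper inducts on $m$, applies the one-variable partial fraction decomposition only to $a_1$ over the algebraic closure of $\bE=\bF(t_2,\dots,t_m)$, proves $\gamma_\ell\in\bF$ by taking residues at $t_1=\beta_\ell$ in the compatibility relation, descends to $\bE$ by averaging over a Galois group, and then feeds the $t_1$-free remainder to the induction hypothesis. Your plan points at the correct structure theorem for closed rational $1$-forms, but as written it has a genuine gap precisely where it departs from the paper, namely in step (1) and the passage to step (3).

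Concretely: your preamble sets up partial fractions with respect to the single variable $t_1$ over $\bE$; after removing all poles visible to that decomposition, the remainder is a polynomial in $t_1$ with coefficients in $\bE$, not an element of $\bF[\vt]$ (denominators free of $t_1$ are units in this setting), so the polynomial Poincar\'e lemma of step (3) does not apply and you would still need an induction on the number of variables, which you never invoke. If instead you intend a truly global reduction over all irreducible $P\in\bF[\vt]$, then the reduction step itself is unjustified: to lower a pole of order $k\ge 2$ along $P$ you subtract $D_i(h)$ with $h=R/P^{k-1}$, where $R\in\bF(\vt)$ lifts the element $\rho\in\operatorname{Frac}(\bF[\vt]/(P))$ determined by the leading coefficients via $c_i\equiv\rho\,D_i(P)\bmod P$. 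Such a lift in general has a denominator involving other irreducible polynomials, so the subtraction raises pole orders along other hypersurfaces, and later reductions along those can raise the order along $P$ again; your sketch offers no termination argument and no rule for choosing lifts (one needs, e.g., prime avoidance to keep denominators away from the currently active primes together with an induction that controls the newly created poles). By contrast, the obstacle you do flag, constancy of the residues, is the more tractable half: at the generic point of $\mathcal{V}(P)$ the tangential derivations $D_j(P)D_i-D_i(P)D_j$ descend to the function field of $\mathcal{V}(P)$, annihilate $\rho$ by the compatibility relations, and span the tangent space, so $\rho$ is algebraic over $\bF$ and hence lies in $\bF$; this is the multivariate counterpart of the paper's simpler residue computation at $t_1=\beta_\ell$. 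The unaddressed bookkeeping of the multivariate reduction is exactly what the paper's induction on $m$ combined with Galois descent is designed to avoid, and it is the missing piece of your argument.
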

\begin{proof}
We proceed by induction on~$m$. To show the base case when~$m=1$, we apply the partial fraction decomposition over the algebraically closed field~$\bF$ to~$a_1$ and get
\[a_1 = \sum_{\ell=1}^L \sum_{j=1}^J \frac{\alpha_{\ell, j}}{(t_1 - \beta_{\ell})^j} \quad \text{
where~$\alpha_{\ell, j}, \beta_{\ell} \in \bF$ with~$\beta_{\ell} \neq \beta_{\ell'}$ for~$\ell\neq \ell'$.}\]
Then the theorem holds by taking
\[ g_0 = \sum_{\ell=1}^L\sum_{j=2}^J \frac{(1-j)^{-1}\alpha_{\ell, j}}{(t_1 - \beta_{\ell})^{j-1}}, \quad \gamma_{\ell} =\alpha_{\ell, 1}, \quad \text{and} \quad g_{\ell} = t_1-\beta_{\ell}.\]

We now assume that~$m\geq 2$ and that the theorem holds for~$m-1$. Let~$\bE$ denote the field~$\bF(t_2, \ldots, t_m)$. Over the algebraic closure~$\overline{\bE}$ of~$\bE$, the base case of the theorem allows us to decompose~$a_1$ into
\begin{equation}\label{EQ:a1}
a_1 = D_1(g_0) + \sum_{\ell=1}^L \gamma_{\ell} \frac{D_1 (g_{\ell} )}{g_{\ell}}
= D_1(g_0) + \sum_{\ell=1}^L \frac{\gamma_{\ell}}{t_1-\beta_{\ell}},
\end{equation}
where~$g_0\in \overline{\bE}(t_1)$ and, for $1\leq\ell\leq L$, we have
$g_{\ell}=t_1-\beta_{\ell}$ and $\beta_{\ell},\gamma_{\ell}\in \overline{\bE}$
such that the $\beta_{\ell}$ are pairwise distinct.

First, we claim that all~$\gamma_{\ell}$'s are actually constants in~$\bF$.
For any~$u\in \overline{\bF(\vt)}$ and $1\leq i<j \leq m$ we have
the commutation formulas
\begin{align*}
 D_i(D_j(u)) &= D_j(D_i(u)),\\
 D_i\left(\frac{D_j(u)}{u}\right)  &= D_j\left(\frac{D_i(u)}{u}\right)
\end{align*}
which, together with $\gamma_{\ell}\in \overline{\bE}$, imply that for $2\leq i \leq m$
\[D_{i}(a_1)
   =  D_1 (D_{i}(g_0)) +
   D_1 \left( \sum_{\ell=1}^L \gamma_{\ell}
 \frac{D_{i}(g_{\ell})}{g_{\ell}} \right)\\ \\
   + \sum_{\ell=1}^L D_{i}(\gamma_{\ell}) \frac{D_{1}(g_{\ell})}{g_{\ell}}.
  \]
Now it follows from the compatibility condition~$D_1(a_{i})= D_{i}(a_1)$ and after remembering $g_\ell = t_\ell - \beta_\ell$  that
\begin{equation}\label{EQ:cc3}
  D_1\left( a_{i} - D_{i}({g_0}) -  \sum_{\ell=1}^L \gamma_{\ell} \frac{D_{i}(g_{\ell})}{g_{\ell}}\right) =
  {\sum_{\ell=1}^L \frac{D_{i}(\gamma_{\ell})}{t_1-\beta_{\ell}}}.
\end{equation}
We now take, for some fixed $1\leq\ell\leq L$, the residue at
$t_1=\beta_{\ell}$ on both sides of~\eqref{EQ:cc3}: the left-hand side
vanishes as it is the derivative (with respect to~$t_1$) of a rational function, and on the right-hand
side we obtain precisely $D_{i}(\gamma_{\ell})$ since
$\beta_{\ell}\neq\beta_{\ell'}$ for $\ell\neq\ell'$.  We get that
$D_{i}(\gamma_{\ell})=0$ for $2\leq i \leq m$ and $1\leq\ell\leq L$, and therefore
the~$\gamma_{\ell}$'s are constants in~$\bF$.

Next, we claim that there always exist~$\tilde{g}_0 \in \bE(t_1)$,
$\tilde{\gamma}_{\ell}\in \bF$, and~$\tilde{g}_{\ell} \in \bE[t_1]\setminus \bE$
with $\gcd(\tilde{g}_{\ell}, \tilde{g}_{\ell'})=1$ for $\ell\neq \ell'$ such that
\begin{equation}\label{EQ:cc4}
a_1 = D_1(\tilde{g}_0) + \sum_{\ell=1}^L \tilde{\gamma}_{\ell} \frac{D_1 (\tilde g_{\ell} )}{\tilde g_{\ell}}.
\end{equation}
Let~$\bK$ be a finite normal extension of~$\bE$ containing the coefficients
of both~$g_0$ and the~$g_{\ell}$'s from~\eqref{EQ:a1} and let~$G$ be the Galois group of~$\bK$ over~$\bE$.
Since~$t_1$ is transcendental over~$\bK$, we have that~$G$ is also the Galois group of~$\bK(t_1)$
over~$\bE(t_1)$. Let~$d=|G|$. Then Equation~\eqref{EQ:a1} leads to
\[
  a_1 = D_1\left(\frac{1}{d}\sum_{\si\in G} \si(g_0)\right) + \sum_{\ell=1}^L \frac{\gamma_{\ell}}{d}
  \frac{D_1(\prod_{\si\in G} \si({g}_{\ell}))}{\prod_{\si\in G} \si({g}_{\ell})}
\]
 and the claim follows by taking
\[\tilde{g}_0 =\frac{1}{d}\sum_{\si\in G} \si(g_0) \in \bE(t_1), \quad \tilde{\gamma}_\ell =
\frac{\gamma_\ell}{d} \in \bF \quad \text{and}\quad
 \tilde{g}_{\ell} = \prod_{\si\in G} \si({g}_{\ell}) \in \bE[t_1].
\]

We have already shown that $\tilde{\gamma}_{\ell} \in \bF$, and therefore
the right-hand side of Equation~\eqref{EQ:cc3} vanishes. Thus, for
$2\leq i \leq m$, we obtain
\[
  a_{i}  = D_{i}(\tilde{g}_0) + \sum_{\ell=1}^L \tilde{\gamma}_{\ell}\frac{D_{i}(\tilde g_{\ell})}{\tilde g_{\ell}} + \bar{a}_i,
  \quad \text{for some {$\bar{a}_i\in \bE$}.}
\]
The compatibility conditions~$D_{i}(a_j)=D_j(a_i)$ imply
that~$D_{i}(\bar{a}_j)=D_j(\bar{a}_i)$ for all~$i, j$ with~$2\leq i<j\leq m$.
By the induction hypothesis, for the~$m-1$ compatible rational functions~$\bar{a}_i$,
there exist $\bar{g}_0\in \bE$, nonzero elements~$\bar{\gamma}_{\ell} \in \bF$
and~$\bar{g}_{\ell}\in \bE\setminus \bF$ for { $\ell=1, \ldots, \bar{L}$}
such that
\[
  \bar{a}_i = D_i(\bar{g}_0) + \sum_{\ell=1}^{\bar{L}} \bar{\gamma}_{\ell} \frac{D_{i}(\bar{g}_{\ell})}{\bar{g}_{\ell}},
  \quad \text{for all~$i$ with~$2\leq i  \leq m$}.
\]
 Since~$\bar{g}_0$ and the~$\bar{g}_{\ell}$'s are free of~$t_1$, we get
\[
  a_i = D_i(\tilde{g}_0 + \bar{g}_0) + \sum_{\ell=1}^L {\tilde{\gamma}_{\ell}} \frac{D_i(\tilde g_\ell)}{\tilde g_\ell} +
  \sum_{\ell=1}^{\bar{L}} \bar{\gamma}_{\ell} \frac{D_{i}(\bar{g}_{\ell})}{\bar{g}_{\ell}}, \quad \text{for all~$i$ with~$1\leq i \leq m$.} \]
This completes the proof.
\end{proof}

The next theorem describes the full structure of compatible rational functions in the general continuous-discrete setting.
}

\begin{theorem}\label{THM:crf}
Assume that~$a_1, \ldots, a_m, b_1, \ldots, b_n \in \bF(\vt, \vk)$ are compatible
rational functions with respect to~$\{D_1, \ldots, D_m, S_1, \ldots, S_n\}$. Then there exist
a rational function $f\in \bF(\vt, \vk)\setminus \{0\}$,
rational functions $g_0, \ldots, g_L, h_1, \ldots, h_n \in{\bF}(\vt) \setminus \{0\}$,
univariate rational functions~$r_{\vv}\in \bF(z)$ for each~$\vv$ in a finite set~$V\subset \bZ^n$,
and constants $\gamma_1, \ldots, \gamma_L, \mu_1, \ldots, \mu_n \in {\bF}$ such that
\begin{alignat*}{3}
 a_i & = D_i(g_0) +  \frac{D_i(f)}{f} +\sum_{\ell=1}^L \gamma_{\ell} \frac{D_i(g_{\ell})}{g_{\ell}}
   + \sum_{j=1}^n k_j \frac{D_i(h_j)}{h_j},
 & \quad & 1 \leq i \leq m,\\
 b_j & = \frac{S_j(f)}{f} \mu_j h_j \prod_{\vv\in V} \sideset{}{_0^{v_j}}\prod_{\ell} r_\vv(\vv\cdot\vk + \ell),
 && 1 \leq j \leq n.
\end{alignat*}
\end{theorem}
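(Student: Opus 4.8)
The plan is to handle the discrete certificates $b_1,\dots,b_n$ and the continuous certificates $a_1,\dots,a_m$ in turn, linking them through \eqref{EQ:ds}. Conditions \eqref{EQ:ss} are exactly the hypotheses of the Ore--Sato Theorem~\ref{TH:mhg}, so that theorem will govern the $\vk$-structure of the $b_j$; condition \eqref{EQ:ds} will then force the univariate factors $r_\vv$ to be free of $\vt$ and will pin the $a_i$ down up to a rational function of $\vt$ only; and condition \eqref{EQ:dd} applied to that remainder is precisely what Theorem~\ref{TH:mhe} resolves. The function $f$ is the ``mixed'' glue that survives in both formulas.

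First I would apply Theorem~\ref{TH:mhg} over the algebraically closed field $\overline{\bF(\vt)}$ (to which the derivations $D_i$ extend uniquely), to $b_1,\dots,b_n\in\overline{\bF(\vt)}(\vk)$. After the standard normalization of the Ore--Sato data --- we may take the $\vv\in V$ primitive and pairwise non-proportional --- this produces $\hat f\in\overline{\bF(\vt)}(\vk)\setminus\{0\}$, constants $\hat\mu_j\in\overline{\bF(\vt)}$, and monic $\hat r_\vv\in\overline{\bF(\vt)}(z)$ realising the asserted form of $b_j$ with $f,\mu_j,h_j,r_\vv$ replaced by $\hat f,\hat\mu_j,1,\hat r_\vv$. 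The heart of the argument is to make the $\hat r_\vv$ free of $\vt$. Computing $D_i(b_j)/b_j$ from this form and subtracting the telescoping contribution $(S_j-1)\bigl(D_i(\hat f)/\hat f\bigr)$ of $\hat f$ together with the $\vk$-free contribution $D_i(\hat\mu_j)/\hat\mu_j$ of $\hat\mu_j$ (which, being $\vk$-free, is itself $(S_j-1)\bigl(k_j\,D_i(\hat\mu_j)/\hat\mu_j\bigr)$), condition \eqref{EQ:ds} shows that $\Theta_{ij}:=\sum_{\vv\in V}\sideset{}{_0^{v_j}}\sum_\ell\bigl(D_i(\hat r_\vv)/\hat r_\vv\bigr)(\vv\cdot\vk+\ell)$ lies in the image of $S_j-1$ on $\overline{\bF(\vt)}(\vk)$. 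Writing $\hat r_\vv(z)=\prod_\beta(z-\theta_{\vv,\beta})^{e_{\vv,\beta}}$ with the $\theta_{\vv,\beta}$ distinct and $e_{\vv,\beta}\in\bZ\setminus\{0\}$, each $D_i(\hat r_\vv)/\hat r_\vv$ has only simple poles, at the $\theta_{\vv,\beta}$, with residue $-e_{\vv,\beta}D_i(\theta_{\vv,\beta})$. Since non-proportional $\vv$'s contribute poles along different directions, the classical criterion --- a rational function is in the image of $S_j-1$ iff along every $S_j$-orbit of its polar hyperplanes the residues sum to zero --- decouples over $V$ and yields, for each $\vv$ and each $\bZ$-coset $C$ of its roots, $\sum_{\theta_{\vv,\beta}\in C}e_{\vv,\beta}D_i(\theta_{\vv,\beta})=0$ for all $i$; since the $e_{\vv,\beta}$ are constant integers, $\sum_{\theta_{\vv,\beta}\in C}e_{\vv,\beta}\theta_{\vv,\beta}\in\bF$.

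If $\sum_{\theta_{\vv,\beta}\in C}e_{\vv,\beta}\neq0$, this forces every root in $C$ to lie in $\bF$. If the exponent sum vanishes, then $F_C(z):=\prod_{\theta_{\vv,\beta}\in C}(z-\theta_{\vv,\beta})^{e_{\vv,\beta}}$ equals $q_C(z+1)/q_C(z)$ for an explicit rational $q_C\in\overline{\bF(\vt)}(z)$ --- solve the first-order recurrence for the exponents of $q_C$; the exponent sum being $0$ makes $q_C$ a finite product --- so that $\sideset{}{_0^{v_j}}\prod_\ell F_C(\vv\cdot\vk+\ell)=S_j\bigl(q_C(\vv\cdot\vk)\bigr)/q_C(\vv\cdot\vk)$ for every $j$, and one absorbs $q_C(\vv\cdot\vk)$ into $\hat f$. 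After treating all such cosets the $r_\vv$ lie in $\bF(z)$. To bring $\hat f$ itself into $\bF(\vt,\vk)$ I would descend by Galois theory: for $\sigma$ in the Galois group $G$ of a finite normal extension $\bK$ of $\bF(\vt)$ carrying the relevant coefficients, comparing the Ore--Sato form of $b_j$ with its $\sigma$-image (the $r_\vv$-part is now $\sigma$-invariant) gives $S_j(\hat f/\sigma\hat f)/(\hat f/\sigma\hat f)=\sigma(\hat\mu_j)/\hat\mu_j\in\bK$; a nonzero rational function whose shift quotients are all $\vk$-free constants must itself be constant, so $\hat f/\sigma\hat f\in\bK^\times$, hence $\hat\mu_j\in\bF(\vt)$ and $\sigma\mapsto\hat f/\sigma\hat f$ is a $1$-cocycle. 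By Hilbert's Theorem~90 it equals $c/\sigma(c)$ for some $c\in\bK^\times$, and $f:=\hat f/c\in\bF(\vt,\vk)\setminus\{0\}$ satisfies $S_j(f)/f=S_j(\hat f)/\hat f$. Taking $h_j:=\hat\mu_j\in\bF(\vt)\setminus\{0\}$ and $\mu_j:=1$ gives the asserted formula for $b_j$.

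With the $b_j$ settled one has $D_i(b_j)/b_j=(S_j-1)\bigl(D_i(f)/f\bigr)+D_i(h_j)/h_j$, so \eqref{EQ:ds} shows that $\tilde a_i:=a_i-D_i(f)/f-\sum_j k_j\,D_i(h_j)/h_j$ is annihilated by every $S_j-1$; being a rational function of $\vt$ and $\vk$ with no dependence on $\vk$, it lies in $\bF(\vt)$. Because $D_{i'}\bigl(D_i(f)/f\bigr)=D_i\bigl(D_{i'}(f)/f\bigr)$ and likewise for each $h_j$, condition \eqref{EQ:dd} for the $a_i$ descends verbatim to $\tilde a_1,\dots,\tilde a_m\in\bF(\vt)$, so Theorem~\ref{TH:mhe} supplies $g_0,\dots,g_L\in\bF(\vt)\setminus\{0\}$ and $\gamma_1,\dots,\gamma_L\in\bF$ with $\tilde a_i=D_i(g_0)+\sum_\ell\gamma_\ell\,D_i(g_\ell)/g_\ell$; unwinding the definition of $\tilde a_i$ gives the asserted formula for $a_i$, and the proof is complete. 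I expect the genuine obstacle to be the removal of $\vt$ from the $\hat r_\vv$: Ore--Sato may well return honestly $\vt$-dependent univariate factors, and one must show from \eqref{EQ:ds} that the dependence is always of the removable ``telescoping'' type; carrying the residue bookkeeping out uniformly over all $n$ shift directions at once, and cleanly separating the ``rigid'' $\bZ$-cosets of roots (forced into $\bF$) from the ``telescoping'' ones (absorbed into $f$), is the part that needs care.
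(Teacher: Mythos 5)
Your proof is correct, but it follows a genuinely different route from the paper. The paper's own proof is a two-step reduction: it quotes Proposition~5.1 of \cite{Chen2011} (equivalently Theorem~4.4.6 of \cite{ChenThesis}) to obtain the mixed separation \eqref{EQ:certt}--\eqref{EQ:certx}, i.e.\ an $f\in\bF(\vt,\vk)$, $h_j\in\bF(\vt)$ and residual certificates $\bar a_i\in\bF(\vt)$, $\bar b_j\in\bF(\vk)$ compatible among themselves, and then applies Theorems~\ref{TH:mhe} and~\ref{TH:mhg} to these residuals. You instead re-prove that separation: apply Ore--Sato over $\overline{\bF(\vt)}$, use \eqref{EQ:ds} together with the discrete-residue criterion for membership in the image of $S_j-1$ to show, coset by coset, that the $\vt$-dependent roots of the $\hat r_\vv$ are either forced into $\bF$ (nonzero exponent sum) or telescopable into $\hat f$ (zero exponent sum), and then descend from $\overline{\bF(\vt)}$ to $\bF(\vt,\vk)$ via Hilbert~90; the continuous half ($\tilde a_i\in\bF(\vt)$ compatible, then Theorem~\ref{TH:mhe}) coincides with the paper's. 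Your residue bookkeeping checks out --- within a $\bZ$-coset all roots have the same image under $D_i$, so the orbit sums degenerate to (exponent sum)${}\cdot D_i(\theta_0)=0$, which is exactly the dichotomy you use --- and the Galois descent is sound, including the observation that a nonzero rational function whose $S_j$-quotients are all $\vk$-free must be $\vk$-free (which simultaneously yields $\hat\mu_j\in\bF(\vt)$ and the cocycle property). What the paper's route buys is brevity, at the price of outsourcing the hard mixed step to \cite{Chen2011}; what yours buys is a self-contained argument that essentially reproves that result in the pure shift/derivation case. Two facts you invoke without proof should be cited or established in a full write-up: the normalization of the Ore--Sato data to primitive, pairwise non-proportional directions in $V$ (needed so the summability test decouples over $V$; in particular $\vv$ and $-\vv$ must not both occur), and the $S_j$-summability criterion itself (only the necessity direction, ``summable implies vanishing orbit residue sums,'' is used) --- both are classical (Abramov-type reduction, cf.\ also \cite{AbramovPetkovsek2002a}), so these are gaps of citation rather than of substance.
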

{
\begin{proof}
By Proposition 5.1 in~\cite{Chen2011} or Theorem 4.4.6 in~\cite{ChenThesis},
there exist~$f\in \bF(\vt, \vk)$, ${ \bar{a}_1, \ldots, \bar{a}_m}, h_1, \ldots, h_n \in \bF(\vt)$,
and ${ \bar{b}_1, \ldots, \bar{b}_n} \in \bF(\vk)$ such that
\begin{equation} \label{EQ:certt}
  a_i = \frac{D_i(f)}{f} + \sum_{j=1}^n k_j \frac{D_i(h_j)}{h_j} + \bar{a}_i
\quad \mbox{for all~$i$ with~$1 \leq i \leq m$,}
\end{equation}
and
\begin{equation} \label{EQ:certx}
  b_j = \frac{S_j(f)}{f} h_j \bar{b}_j \quad \mbox{for all~$j$ with~$1 \leq j \leq n$},
\end{equation}
and where $\bar{a}_1,\dots,\bar{a}_m$ are compatible with respect to~$\{D_1, \ldots, D_m\}$,
and $\bar{b}_1,\dots,\bar{b}_n$ are compatible with respect to~$\{S_1, \ldots, S_n\}$.
Now the full structure of the~$a_i$'s and~$b_j$'s follows from applying
Theorems~\ref{TH:mhe} and~\ref{TH:mhg} respectively to the $\bar{a}_i$'s and~$\bar{b}_j$'s.
\end{proof}
}

\section{Structure of mixed hypergeometric terms} \label{SEC:shf}
In this section, we derive the structure of hypergeometric terms
from that of their associated certificates, which are compatible rational functions.
To this end, let us recall some terminologies from~\cite{AbramovPetkovsek2002a}.
\begin{define}\label{DEF:conj}
Two hypergeometric terms~$H_1, H_2$ are said to be \emph{conjugate} if they have
the same certificates.
\end{define}
Note that if $H_1\eqa H_2$ then~$H_1$ and~$H_2$ are also conjugate to each other;
however, the converse is not true.
The first reason to introduce the notion of conjugacy is that it is the main
tool to ``correct'' Conjecture~\ref{CONJ:wz} (see Theorem~\ref{THM:main}).
As it was mentioned in the introduction the hypergeometric term $|k_1 - k_2|$
is holonomic (see Definition~\ref{DEF:holonomic}), but not proper. On the other
hand, $|k_1 - k_2|$ is conjugate to the proper term~$k_1-k_2$, but they are
not equal modulo an algebraic set.

The second reason for introducing the notion of conjugacy is related to the
inversion of sequences.  Recall that a multivariate sequence in~$\bS$ is
invertible if all its entries are nonzero.  The concept of
nonvanishing rising factorials will allow us to construct invertible
hypergeometric terms which are conjugate to those given in classical
notation (rising factorials, binomial coefficients, etc.). Recall
that the classical~\emph{rising factorial}~$(\alpha)_k$ for~$\alpha \in {\bF}$
and~$k\in \bZ$ is defined by
\[
  (\alpha)_k = \left \{
  \begin{array}{ll}
    \prod_{i=0}^{k-1} (\alpha + i), & \text{if } k\geq 0; \\[0.5em]
    \prod_{i=1}^{-k} (\alpha-i)^{-1}, & \text{if } k<0 \text{ and } \alpha\neq 1, 2, \ldots, -k;\\[0.5em]
    0, & \text{otherwise.}
  \end{array}\right.
\]
As a companion notion of rising factorials, Abramov and Petkov$\check{\rm s}$ek introduced
the \emph{nonvanishing rising factorial} $(\alpha)_k^*$ for~$\alpha \in {\bF}$
and~$k\in \bZ$ as follows:
\[
  (\alpha)_k^* = \left \{
  \begin{array}{ll}
    (\alpha)_k, & \text{if } (\alpha)_k\neq 0; \\[0.5em]
    (\alpha)_{1-\alpha}(0)_{\alpha+k}, & \text{if } \alpha\in \bZ \text{ and } \alpha >0 \text{ and } \alpha+k\leq 0;\\[0.5em]
    (\alpha)_{-\alpha}(1)_{\alpha+k-1}, & \text{if } \alpha\in \bZ \text{ and } \alpha \leq 0 \text{ and } \alpha+k>0.
  \end{array}\right.
\]
It is easy to verify that~$(\alpha)_k$ and~$(\alpha)_k^*$ are conjugate, since
they have the same certificate. Indeed, they both satisfy the recurrence
\[
  (k+\alpha)f(k+1) - (k+\alpha)^2 f(k) = 0.
\]
Similarly, we can consider factorials with an integer-linear combination of
several variables in the argument: for $(\alpha)_{\vv \cdot \vk}$ with some
fixed $\vv\in \bZ^n$, a direct calculation yields the certificate
\[
  S_i\bigl((\alpha)_{\vv \cdot \vk}\bigr) \eqa
  \Biggl(\sideset{}{_0^{v_i}}\prod_{\ell} (\alpha+\vv \cdot \vk + \ell)\Biggr) \, (\alpha)_{\vv \cdot \vk},
\]
where we use the notation introduced in~\eqref{EQ:prod}.

\begin{define}[Factorial term] \label{DEF:factorial}
A hypergeometric term~$T(\vk)\in \bS$ over~$\bF(\vk)$ is called a
\emph{factorial term} if it has the form
\[
  T(\vk) = \mu_1^{k_1} \cdots \mu_n^{k_n}
  \Biggl(\prod_{i=1}^I(\alpha_i)^*_{\vv_i\cdot\vk}\Biggr)
  \Biggl(\prod_{j=1}^J(\beta_j)^*_{\vw_j\cdot\vk}\Biggr)^{\!\!-1}
\]
where~$\mu_1, \ldots, \mu_n\in \bF$, $\alpha_i,\beta_j\in\bF$
and $\vv_i,\vw_j\in\bZ^n$ for $1\leq i\leq I$ and $1\leq j\leq J$.
\end{define}

The dictionary in Table~\ref{TAB:trans} below enables us to translate the
structure of compatible rational functions in~Theorem~\ref{THM:crf}
to that of their corresponding hypergeometric terms.

\medskip

\begin{table}[ht]
\begin{center}
\renewcommand{\arraystretch}{2.5}
\begin{tabular}{|c|c|c|c|}
  \hline
  & Hypergeometric terms & $t_i$-certificate   &  $k_j$-certificate \\ \hline
  (1) & $H_1 \cdot H_2$ & $\displaystyle{\frac{D_i(H_1)}{H_1} + \frac{D_i(H_2)}{H_2}}$  & $\displaystyle{\frac{S_j(H_1)}{H_1} \cdot  \frac{S_j(H_2)}{H_2}}$ \\[1ex] \hline
  (2) & $f(\vt, \vk)\in \bF(\vt, \vk)\setminus\{0\}$ & $\displaystyle{\frac{D_i(f)}{f}}$  & $\displaystyle{\frac{S_j(f)}{f}}$  \\[1ex] \hline
  (3) & $\exp(g_0(\vt))$ & $D_i(g_0)$ & $1$ \\[1ex]  \hline
  (4) & $\displaystyle{\prod_{\ell=1}^L g_{\ell}(\vt)^{\gamma_{\ell}}}$ & $\displaystyle{\sum_{\ell=1}^L \gamma_{\ell} \frac{D_i(g_\ell)}{g_\ell}}$ & 1\\[1ex] \hline
  (5) & $\displaystyle{\prod_{j=1}^n h_j(\vt)^{k_j}}$ & $\displaystyle{\sum_{j=1}^n k_{j} \frac{D_i(h_j)}{h_j}}$ & $h_j(\vt)$\\[1ex] \hline
  (6) & $(\alpha)^*_{\vv \cdot \vk}$ & 0 & $\displaystyle{\sideset{}{_0^{v_j}}\prod_{\ell} (\alpha+\vv \cdot \vk + \ell)}$\\[1ex]  \hline
\end{tabular}
\end{center}
\caption{Dictionary between hypergeometric terms and their certificates.}\label{TAB:trans}
\end{table}

For a rational function $g \in \bF[\vt]$ and a constant $\gamma \in \bF$, by $g^\gamma$ we mean a term
with certificate $\gamma \frac{D_i(g)}{g}$ for each $1\le i\le m$, as indicated
in row~(4) of Table~\ref{TAB:trans}, where $L=1$ is taken. In other words,
$g^\gamma$ is a solution of the system $D_i f - \gamma \frac{D_i(g)}{g} f =0\ (1 \le i \le m)$
in the unknown~$f$. Likewise, $\exp(g)$ is a solution of the system $D_i f - D_i(g) f=0\ (1 \le i \le m)$.

\begin{remark} \label{remark:exponentiation}
It is important to note that $g(\vt)^\gamma$ and $\exp({g(\vt)})$ are
determined only up to a constant, because they are defined only as solutions
of differential equations without boundary conditions.  Consequently we have
$g(\vt)^{\gamma_1} g(\vt)^{\gamma_2} = c g(\vt)^{(\gamma_1+\gamma_2)}$ for
some nonzero constant $c \in \bF$, but we do not have that $c=1$ as we would
like.  Similarly, $\left(g(\vt)^{\gamma_1}\right)^{\gamma_2}=c
g(\vt)^{(\gamma_1 \gamma_2)}$ where $c$ is not necessarily~$1$, even when
$\gamma_2$ is an integer. Analogously, the power laws for $\exp(g(\vt))$ are
different from the usual ones. In our context, however, these differences turn
out to be irrelevant. This is similar to the way how exponentials and
logarithms are introduced in the context of symbolic
integration~\cite{BronsteinBook}.  A version of the Wilf-Zeilberger conjecture
without this kind of pseudo-exponentiation remains open and might be achieved
with a more careful construction of the extension field or by considering the
case where the continuous variables are complex.
\end{remark}
\begin{theorem}\label{THM:structure}
Any hypergeometric term over~$\bF(\vt, \vk)$
is conjugate to a multivariate sequence of the form
\begin{equation}\label{EQ:mf}
  F(\vt, \vk) \exp\bigl(g_0(\vt)\bigr) \,
  \Biggl(\prod_{\ell=1}^L g_{\ell}(\vt)^{\gamma_{\ell}}\Biggr)
  \Biggl(\prod_{j=1}^n h_j(\vt)^{k_j}\Biggr) \, T(\vk)
\end{equation}
where $F(\vt,\vk)\in\bS$ is the rational sequence corresponding to some rational
function~$f\in \bF(\vt, \vk)$, and where $g_0, \ldots, g_L, h_1, \ldots, h_n\in \bF(\vt)$,
$\gamma_1, \ldots, \gamma_L\in \bF$, and~$T(\vk)$ is a factorial term that is nontrivial,
i.e., that is not equal to the zero sequence modulo an algebraic set,
in symbols: $T\neqa 0$.
\end{theorem}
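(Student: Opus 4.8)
The plan is to combine Theorem~\ref{THM:crf} with the translation dictionary in Table~\ref{TAB:trans}. Let~$H$ be a hypergeometric term over~$\bF(\vt,\vk)$. If~$H\eqa 0$ the statement is vacuous (any conjugate representative works, and one can take~$f=0$ after adjusting the convention), so assume~$H$ is nontrivial. By the remark following Definition~\ref{DEF:crf}, the certificates~$a_1,\dots,a_m,b_1,\dots,b_n$ of~$H$ are uniquely determined (in reduced form) and compatible with respect to~$\{D_1,\dots,D_m,S_1,\dots,S_n\}$. Applying Theorem~\ref{THM:crf} produces~$f\in\bF(\vt,\vk)\setminus\{0\}$, rational functions~$g_0,\dots,g_L,h_1,\dots,h_n\in\bF(\vt)\setminus\{0\}$, constants~$\gamma_1,\dots,\gamma_L,\mu_1,\dots,\mu_n\in\bF$, a finite set~$V\subset\bZ^n$, and monic univariate~$r_\vv\in\bF(z)$ such that the~$a_i$ and~$b_j$ have the stated product/sum shape.

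The second step is to build the candidate sequence~\eqref{EQ:mf} and check that its certificates match those of~$H$ termwise. Here I use that the certificate map is multiplicative: the $D_i$-certificate of a product is the sum of the $D_i$-certificates, and the $S_j$-certificate of a product is the product of the $S_j$-certificates (the first row of Table~\ref{TAB:trans}). For each factor I read off its certificates from the corresponding row of the table: $F(\vt,\vk)$ contributes~$D_i(f)/f$ and~$S_j(f)/f$; $\exp(g_0)$ contributes~$D_i(g_0)$ and~$1$; $\prod_\ell g_\ell^{\gamma_\ell}$ contributes~$\sum_\ell\gamma_\ell D_i(g_\ell)/g_\ell$ and~$1$; $\prod_j h_j^{k_j}$ contributes~$\sum_j k_j D_i(h_j)/h_j$ and~$h_j(\vt)$; and the factorial term~$T(\vk)$, whose $k_j$-shift was computed just before Definition~\ref{DEF:factorial} to be~$\mu_j\prod_{\vv\in V}\sideset{}{_0^{v_j}}\prod_\ell r_\vv(\vv\cdot\vk+\ell)$ after matching the data~$V$, $r_\vv$, $\mu_j$ coming out of Theorem~\ref{THM:crf}, contributes~$0$ to the $D_i$-side. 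Summing the $D_i$-certificates reproduces exactly the first displayed formula of Theorem~\ref{THM:crf}, and multiplying the $S_j$-certificates reproduces the second; hence~\eqref{EQ:mf} is conjugate to~$H$.

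A point needing care is that the~$r_\vv$ delivered by Theorem~\ref{THM:crf} are arbitrary monic rational functions in~$\bF(z)$, whereas the factorial term of Definition~\ref{DEF:factorial} is assembled from linear factors~$(\alpha)^*_{\vv\cdot\vk}$. I would resolve this by factoring each monic~$r_\vv(z)=\prod_i(z+\alpha_i)\big/\prod_j(z+\beta_j)$ over the algebraically closed field~$\bF$ into linear pieces, observing that a single linear factor~$(\vv\cdot\vk+\ell+\alpha)$ in the~$\ell$-product is precisely the $S_j$-certificate contribution of~$(\alpha)^*_{\vv\cdot\vk}$, so that the full product over~$\vv\in V$ and the exponents dictated by~$\sideset{}{_0^{v_j}}\prod_\ell$ reassembles as a product (and quotient) of nonvanishing rising factorials of the shape in Definition~\ref{DEF:factorial}; the switch from~$(\alpha)_{\vv\cdot\vk}$ to~$(\alpha)^*_{\vv\cdot\vk}$ is harmless since the two are conjugate, and it is exactly what guarantees nontriviality~$T\neqa 0$. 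I expect this bookkeeping---tracking the sign conventions in~$\sideset{}{_0^{v_j}}\prod$ for negative~$v_j$ and absorbing the scalar discrepancies flagged in Remark~\ref{remark:exponentiation}---to be the only real obstacle; once it is in place the equality of certificates is a mechanical term-by-term comparison.
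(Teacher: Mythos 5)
Your proposal is correct and follows essentially the same route as the paper: the paper's proof is a one-line citation of Theorem~\ref{THM:crf}, the dictionary of Table~\ref{TAB:trans}, and a result of Abramov--Petkov\v{s}ek that handles exactly the reassembly of the Ore--Sato data $\mu_j$, $V$, $r_\vv$ into a nontrivial factorial term, which is the step you carry out explicitly by factoring the monic $r_\vv$ over the algebraically closed field~$\bF$ and matching certificates factor by factor. Your certificate-matching argument (including the observation that the scalar ambiguities of Remark~\ref{remark:exponentiation} do not affect certificates, hence conjugacy) is a faithful filling-in of the details the paper leaves to the cited references.
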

\begin{proof}
This follows from Theorem~\ref{THM:crf}, Corollary 4 in~\cite{AbramovPetkovsek2002a},
and the dictionary in Table~\ref{TAB:trans}.
\end{proof}

\begin{define} \label{DEF:sf}
We call the form in~\eqref{EQ:mf} a \emph{standard form} if the denominator of the rational sequence~$F$,
i.e., the denominator of the underlying rational function~$f$,
contains no factors in~$\bF[\vt]$ and no \emph{integer-linear} factors of the form~$\alpha+\vv\cdot\vk$
with~$\vv\in \bZ^n$ and~$\alpha\in \bF$.
\end{define}

\begin{remark} \label{RE:standard}
We can always turn~\eqref{EQ:mf} into a standard form
by moving all factors in~$\bF[\vt]$ from the denominator
of~$f$ into the part~$\prod_{\ell=1}^L g_{\ell}(\vt)^{\gamma_{\ell}}$,
and moving all \emph{integer-linear} factors
into the factorial term via the formula
$\alpha+\vv\cdot \vk = (\alpha)^*_{\vv\cdot \vk+1}/(\alpha)^*_{\vv\cdot \vk}$.
\end{remark}
According to the definition by Wilf and Zeilberger~\cite{Wilf1992},
Theorem~\ref{THM:structure} distinguishes an arbitrary hypergeometric
term from a proper one as follows.

\begin{define}[Properness]\label{DEF:proper}
A hypergeometric term over~$\bF(\vt, \vk)$ is said to be~\emph{proper} if it
of the form
\begin{equation}\label{EQ:proper}
  p(\vt, \vk) \exp\!\bigl(g_0(\vt)\bigr) \,
  \Biggl(\prod_{\ell=1}^L g_{\ell}(\vt)^{\gamma_{\ell}}\Biggr)
  \Biggl(\prod_{j=1}^n h_j(\vt)^{k_j}\Biggr) \, T(\vk)
\end{equation}
where~$p$ is a sequence corresponding to some polynomial~in~$\bF[\vt, \vk]$,
and where $g_0, \ldots, g_L, h_1, \ldots, h_n\in \bF(\vt)$,
$\gamma_1, \ldots, \gamma_L\in \bF$, and where $T(\vk)\neqa 0$ is a nontrivial factorial term.
\end{define}

\begin{define}[Conjugate-Properness]
A hypergeometric term over~$\bF(\vt, \vk)$ is said to be~\emph{conjugate-proper} if it is conjugate to a proper term.
\end{define}

\begin{prop}\label{PROP:proper}
Let~$H(\vt,\vk)$ be a hypergeometric term such that~$S_j(H) \eqa H$ for all~$j$ with~$1\leq j \leq n$.
Then~$H$ is conjugate-proper.
\end{prop}
\begin{proof}
Let $a_1, \ldots, a_m, b_1, \ldots, b_n \in \bF(\vt, \vk)$ be the certificates of $H(\vt,\vk)$ with respect to $t_1, \ldots, t_m, k_1, \ldots, k_n$, respectively.
If $S_j(H) \eqa H$ for all~$j$ with~$1\leq j \leq n$, then $b_j = 1$ for all~$j$ with~$1\leq j \leq n$ by~\cite[ Proposition 1]{AbramovPetkovsek2002a}.  The compatibility conditions
${D_i(b_j)}/{b_j}  = S_j(a_i) - a_i$ for all $i, j$ with $1 \le i \le m$ and $1 \le j \le n$
imply that $a_i \in \bF(\vt)$ for all $i$ with $1 \le i \le m$. By Theorem~\ref{THM:structure}, $H$ is conjugate to a hypergeometric term of the form
$F(\vt) \exp\bigl(g_0(\vt)\bigr) \,
  \prod_{\ell=1}^L g_{\ell}(\vt)^{\gamma_{\ell}}$, where  $F, g_0, g_1, \ldots, g_L\in \bF(\vt)$ and $\gamma_{\ell} \in \bF$. By setting $g_{L+1}$ as the denominator of $F(\vt)$
 and $\gamma_{L+1}$ as $-1$, we conclude that $H$ is  conjugate to a proper hypergeometric term.
\end{proof}

\section{Holonomic Functions}
In this section, we recall some results concerning holonomic functions and
D-finite functions from~\cite{Coutinho1995, Lipshitz1989}.
The \emph{Weyl algebra}~$\Wt := \bF[\vt]\langle\Dt\rangle$ is the noncommutative polynomial ring in the variables
$\vt =t_1,\dots,t_m$ and $\Dt = D_1, \ldots, D_m$, in which the following multiplication rules hold:
\begin{alignat*}{3}
  D_iD_j &= D_jD_i, &\quad& 1\leq i, j\leq m,\\
  D_i \, p &= p \, D_i + \frac{\partial p}{\partial t_i}, && 1\leq i\leq m, \, p\in\bF[\vt].
\end{alignat*}
The Weyl algebra is the ring of linear partial differential operators with
polynomial coefficients.  Analogously, we define the \emph{Ore algebra}~$\Ot$
as the ring $\bF(\vt)\langle\Dt\rangle$ of linear partial differential
operators with rational function coefficients.

\begin{define}[Holonomicity] \label{DEF:holo1}
A finitely generated left $\bW_{\!\vt}$-module is
\emph{holonomic} if it is zero, or if it has Bernstein
dimension~$m$ (see for example~\cite[Chap.~9]{Coutinho1995}).
Let $H(\vt)$ be a function in a left $\Wt$-module
of functions. We define the annihilator of~$H$ in~$\Wt$ as
\[
  \operatorname{ann}_{\Wt}(H) := \{P\in \Wt \mid P\cdot H = 0 \},
\]
which is a left ideal in~$\Wt$. Then~$H(\vt)$ is said to be
\emph{holonomic} with respect to $\Wt$ if the left~$\Wt$-module
$\Wt/\operatorname{ann}_{\Wt}(H)$ is holonomic. Differently
stated, this means that the left ideal $\operatorname{ann}_{\Wt}(H)$
has dimension~$m$.
\end{define}
By Bernstein's inequality~\cite[Thm.~1.3]{Bernstein1971}, any finitely
generated nonzero left~$\Wt$-module has dimension at least~$m$. So
holonomicity indicates the minimality of dimension for nonzero
$\Wt$-modules, and in terms of functions this means: holonomic functions are
solutions of maximally overdetermined systems of linear partial differential
equations.

\begin{define}[D-finiteness~\cite{Lipshitz1988}] \label{DEF:dfinite}
A left ideal~$\mathcal{I}$ of~$\Ot$ is said to be D-finite if
$\dim_{\bF(\vt)} (\Ot/I) < \infty$. Assume that a function~$H(\vt)$
can be viewed as an element of a left $\Ot$-module.
Then $H(\vt)$ is said to be \emph{D-finite} with respect to~$\Ot$ if the left ideal
$\operatorname{ann}_{\Ot}(H) := \{P\in \Ot \mid P\cdot H = 0 \}$ is D-finite.
Equivalently, the vector space generated by all derivatives~$D_1^{i_1}\cdots D_m^{i_m}(H)$,
$i_1,\dots,i_m\geq0$, is finite-dimensional over~$\bF(\vt)$.
\end{define}

The next theorem shows that the notions of holonomicity and D-finiteness
coincide, which follows from two deep results of
Bernstein~\cite{Bernstein1971} and~Kashiwara~\cite{Kashiwara1978}.  An
elementary proof of the direction ``$\Longrightarrow$'' has been given by
Takayama~\cite[Thm.~2.4]{Takayama1992}. The other direction follows from the
elimination property~\cite[Lemma~4.1]{Zeilberger1990}, see also
the paragraph before Proposition~\ref{PROP:elim} below.

\begin{theorem}[Bernstein--Kashiwara equivalence]\label{THM:kashi}
Let~$\mathcal{I}$ be a left ideal of~$\Ot$. Then $\mathcal{I}$ is D-finite if
and only if~$\Wt/(\mathcal{I}\cap \Wt)$ is a holonomic $\Wt$-module.
\end{theorem}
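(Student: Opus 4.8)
The plan is to reduce the equivalence to the two ``elementary'' halves of Takayama~\cite[Thm.~2.4]{Takayama1992} and Zeilberger~\cite[Lemma~4.1]{Zeilberger1990}, after first recording how~$\mathcal I$ is determined by its contraction $J:=\mathcal I\cap\Wt$. Since $\Ot$ is the localization of~$\Wt$ at the (two-sided) Ore set $\bF[\vt]\setminus\{0\}$, this localization is flat (cf.~\cite[Chap.~9]{Coutinho1995}). First I would clear denominators: if $P\in\mathcal I$ then $qP\in\mathcal I\cap\Wt=J$ for some nonzero $q\in\bF[\vt]$, whence $\mathcal I=\Ot J$; the same argument shows that $M:=\Wt/J$ is torsion-free over~$\bF[\vt]$, so~$M$ embeds into $\bF(\vt)\otimes_{\bF[\vt]}M$, and the latter is canonically isomorphic to $\Ot\otimes_{\Wt}M=\Ot/\mathcal I$. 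Hence ``$\mathcal I$ is D-finite'' is precisely the assertion $\operatorname{rank}_{\bF[\vt]}M<\infty$, and it suffices to prove that~$M$ is holonomic if and only if $\operatorname{rank}_{\bF[\vt]}M<\infty$.

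For the direction ``holonomic $\Rightarrow$ D-finite'' I would argue as Takayama: fix a good filtration $F_\bullet M$ with respect to the Bernstein filtration $B_\bullet\Wt$, and suppose $w_1,\dots,w_r\in M$ have $\bF(\vt)$-linearly independent images in $\bF(\vt)\otimes_{\bF[\vt]}M$. Then $\bigoplus_{k}\bF[\vt]w_k\subseteq M$ is free, so the elements $\{t^{\beta}w_k : |\beta|\le s,\ 1\le k\le r\}$ are $\bF$-linearly independent; choosing $s_0$ with $w_1,\dots,w_r\in F_{s_0}M$ they lie in $F_{s+s_0}M$, so $\dim_{\bF}F_{s+s_0}M\ge r\binom{s+m}{m}\sim r\,s^{m}/m!$. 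Comparing with the Hilbert polynomial of the holonomic module~$M$, whose degree is~$m$ and whose leading coefficient equals $e(M)/m!$ for the Bernstein multiplicity $e(M)$, forces $r\le e(M)$. Thus $\operatorname{rank}_{\bF[\vt]}M\le e(M)<\infty$.

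For ``D-finite $\Rightarrow$ holonomic'' I would argue as Zeilberger: assume $r:=\operatorname{rank}_{\bF[\vt]}M<\infty$ and pick $e_1,\dots,e_r\in M$ mapping to an $\bF(\vt)$-basis of $\bF(\vt)\otimes_{\bF[\vt]}M$, with $e_1=\overline 1$ (possible since~$M$ is torsion-free and cyclic). Writing $D_i e_k=\sum_l c_{ikl}e_l$ with $c_{ikl}\in\bF(\vt)$ and clearing a common denominator $q\in\bF[\vt]\setminus\{0\}$, one gets $qD_i\bigl(\sum_l\bF[\vt]e_l\bigr)\subseteq\sum_l\bF[\vt]e_l$, and a short induction on $|\gamma|$ yields $q^{|\gamma|}D^{\gamma}\overline 1=\sum_l p_l^{(\gamma)}e_l$ with $\deg p_l^{(\gamma)}=O(|\gamma|)$. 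Consequently $q^{s}\cdot(B_s\Wt\cdot\overline 1)$ lies in $\sum_l\{\text{polynomials of degree }O(s)\}\,e_l$, so $\dim_{\bF}(B_s\Wt\cdot\overline 1)=O(s^{m})$. Since $\{B_s\Wt\cdot\overline 1\}$ is a good filtration of~$M$, its Bernstein dimension is at most~$m$; by Bernstein's inequality~\cite[Thm.~1.3]{Bernstein1971} a nonzero finitely generated $\Wt$-module has dimension at least~$m$, so either $M=0$ or $M$ has dimension exactly~$m$. In both cases $M$ is holonomic, which finishes the proof. The ring-theoretic reduction in the first paragraph is routine, and the main work lies in the two Hilbert-polynomial estimates above; of these, the denominator-and-degree bookkeeping in the last paragraph is the subtler point.
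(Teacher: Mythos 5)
Your sketch is correct and follows essentially the same route the paper itself relies on for this theorem (which it only cites rather than proves): the localization/contraction reduction to $\Wt/(\mathcal I\cap\Wt)$, Takayama's Hilbert-polynomial comparison for ``holonomic $\Rightarrow$ D-finite'', and Zeilberger's denominator-and-degree estimate plus Bernstein's inequality for the converse. The remaining details you leave implicit (e.g.\ that $\{B_s\Wt\cdot\overline 1\}$ is a good filtration and that the zero module is holonomic by convention) are routine and consistent with the paper's definitions.
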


In order to define holonomicity in the case of several continuous and
discrete variables, the concept of generating functions is employed.
The reason is that Definition~\ref{DEF:holo1} cannot be literally translated
to $\bF[\vk]\langle\Sk\rangle$, the shift analog of the Weyl algebra, since
there Bernstein's inequality does not hold.
\begin{define}\label{DEF:fgf}
For~$H(\vt,\vk)\in \bS$ we call the formal power series
\[
  G(\vt, \vz) = \sum_{k_1, k_2, \ldots, k_n \geq 0} H(\vt,\vk) z_1^{k_1}\cdots z_n^{k_n}
\]
the \emph{generating function} of~$H$.
\end{define}
The definition requires evaluating~$H$ at integer points $\vk\in\bN^n$; note
that this is always possible by the construction of~$\bS$ and the way in which the
rational functions are embedded into it, see Definition~\ref{DEF:RationalSequence}.

\begin{define}\label{DEF:holonomic}
An element $H(\vt,\vk)\in\bS$ is said to be \emph{holonomic}
with respect to~$\vt$ and~$\vk$ if its generating
function $G(\vt,\vz)$ is holonomic with respect to
$\Wtz=\bF[\vt,\vz]\langle \Dt, \Dz\rangle$.
\end{define}
We recall the notion of \emph{diagonals} of formal power series,
which will be useful for proving the following results about closure properties.
For a formal power series
\[
  G(\vz)=\sum_{i_1, \ldots, i_n \geq 0} g_{i_1,\dots,i_n} z_1^{i_1}\cdots z_n^{i_n},
\]
the primitive diagonal~$I_{z_1, z_2}(G)$ is defined as
\[
  I_{z_1, z_2}(G) :=\sum_{i_1, i_3, \ldots, i_n\geq 0} g_{i_1, i_1,\dots,i_n} z_1^{i_1}z_3^{i_3}\cdots z_n^{i_n}.
\]
Similarly, one can define the other primitive diagonals~$I_{z_i,z_j}$ for~$i<j$. By a diagonal we mean
any composition of the~$I_{z_i,z_j}$. The following theorem states that D-finiteness is
closed under the diagonal operation for formal power series.
\begin{theorem}[Lipshitz~\cite{Lipshitz1988}, 1988]\label{THM:diag}
If~$G(\vz)\in \bF[[z_1, \ldots, z_n]]$ is D-finite, then any diagonal of~$G$ is D-finite.
\end{theorem}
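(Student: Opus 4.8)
The plan is to reduce to a single \emph{primitive} diagonal, to realise that diagonal as a residue, and then to combine two standard closure properties of D-finite power series. By definition every diagonal is a finite composition of the operators $I_{z_i,z_j}$, and each such operator removes one variable, so it suffices by induction on the length of the composition (the empty composition being the trivial case) to prove the following: if $G(z_1,\dots,z_n)\in\bF[[z_1,\dots,z_n]]$ is D-finite, then so is $I_{z_1,z_2}(G)$; after permuting the $z_i$ this is the general primitive diagonal.

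To do this, set $\vz'=(z_3,\dots,z_n)$ and expand $G=\sum_{i_1,i_2\ge0}c_{i_1,i_2}(\vz')\,z_1^{i_1}z_2^{i_2}$ with $c_{i_1,i_2}\in\bF[[\vz']]$, so that $I_{z_1,z_2}(G)=\sum_{i\ge0}c_{i,i}(\vz')\,z_1^{i}$. I would then invoke the residue identity
\[
  I_{z_1,z_2}(G)(z_1,\vz')\;=\;\frac{1}{2\pi\sqrt{-1}}\oint G\!\left(s,\frac{z_1}{s},\vz'\right)\frac{ds}{s},
\]
which holds because the coefficient of $s^{-1}$ in $s^{-1}G(s,z_1/s,\vz')=\sum c_{i_1,i_2}(\vz')\,s^{i_1-i_2-1}z_1^{i_2}$ is contributed exactly by the pairs with $i_1=i_2$. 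Since the D-finiteness of a single series is a finite-type condition involving only finitely many elements of~$\bF$, a standard Lefschetz-principle argument lets us assume $\bF=\bC$ and read this identity analytically: for $\vz'$ and $z_1$ small, the contour $|s|=\varepsilon$ lies inside the annulus on which $s\mapsto G(s,z_1/s,\vz')$ converges, and the integral picks out the coefficient of $s^{-1}$ of its Laurent expansion.

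Granting the residue formula, the proof concludes with two closure properties, each applied with $\vz'$ (and $z_1$) as parameters. First, D-finiteness is preserved under substituting a ratio of monomials (more generally, an algebraic function) for a variable, so
\[
  \widetilde G(s,z_1,\vz')\;:=\;\frac{1}{s}\,G\!\left(s,\frac{z_1}{s},\vz'\right)
\]
is D-finite in $(s,z_1,\vz')$. Second, D-finiteness is preserved by integration along a cycle: if $\widetilde G$ is D-finite, then $\oint\widetilde G\,ds$ is D-finite in $(z_1,\vz')$. This is the creative-telescoping closure property: finite-dimensionality of the $\bF(s,z_1,\vz')$-span of the derivatives of $\widetilde G$ forces, for each $u\in\{z_1,z_3,\dots,z_n\}$, a telescoper $L_u+D_sM_u\in\operatorname{ann}(\widetilde G)$ with $L_u$ a nonzero univariate operator in $D_u$ whose coefficients are free of $s$; applying $L_u$ under the integral sign and discarding the exact part $D_sM_u$, whose cycle integral vanishes, yields operators annihilating $\oint\widetilde G\,ds$ that together witness its D-finiteness. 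Combined with the residue identity, this gives D-finiteness of $I_{z_1,z_2}(G)$, and the induction finishes the argument. An alternative, $D$-module-theoretic proof instead expresses the diagonal as a direct image of a holonomic module and appeals to Kashiwara's preservation of holonomicity under direct images together with the equivalence of Theorem~\ref{THM:kashi}.

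The step I expect to be the main obstacle is the rigorous justification of the residue identity and the integration closure \emph{over an arbitrary field} rather than over $\bC$: the object $G(s,z_1/s,\vz')$ does not lie in a Laurent series ring of bounded pole order in $s$, so making the argument purely algebraic requires setting up the correct completed coefficient ring, defining the constant-term operator on it, and checking that it is compatible with the derivations so that the telescoper manipulation above is legitimate there. This is exactly the technical core of Lipshitz's treatment in~\cite{Lipshitz1988}; once it is in place, the remaining ingredients — closure under substitution and the existence of telescopers — are routine linear algebra.
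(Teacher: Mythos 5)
The paper itself offers no proof of Theorem~\ref{THM:diag}: it is quoted as a known result and attributed to Lipshitz~\cite{Lipshitz1988}, so there is no internal argument to compare against. Your outline does follow the route of Lipshitz's original proof (primitive diagonal realized as the $s$-residue of $s^{-1}G(s,z_1/s,\vz')$, closure under the substitution $z_2\mapsto z_1/s$, then a telescoper $L_u+D_sM_u$ whose exact part dies under the residue), and the reduction to a single primitive diagonal and the residue identity itself are stated correctly.

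As a proof, however, it has a genuine gap, which you flag but do not close. The analytic crutch is not available: a D-finite formal power series over $\bC$ need not converge anywhere (e.g.\ $\sum_{k\ge0}k!\,z^k$ is D-finite), so the Lefschetz reduction to $\bC$ does not justify the contour integral or the claim that $s\mapsto G(s,z_1/s,\vz')$ converges on an annulus; the residue identity and the subsequent manipulations must be carried out formally. That requires specifying the ring in which $G(s,z_1/s,\vz')$ lives (series in $z_1,\vz'$ whose coefficients are Laurent series in $s$, with no uniform bound on the pole orders), defining D-finiteness and the constant-term/residue operator there, proving the substitution closure in that setting, and verifying that the residue kills $D_s$-exact elements and commutes with $D_{z_1},D_{z_3},\dots,D_{z_n}$ so that applying $L_u+D_sM_u$ under the residue is legitimate. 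Moreover, the existence of a telescoper with $L_u\neq0$ and coefficients free of $s$ is not quite ``routine linear algebra'': it needs the dimension-counting elimination argument (the analogue of the lemma used in Proposition~\ref{PROP:elim}), since a nonzero annihilating operator does not automatically have nonzero $D_s$-free part. These verifications are exactly the content of \cite{Lipshitz1988}; without them your text is an accurate summary of that proof rather than a proof, and the honest options are to carry out the formal residue calculus in full or to cite Lipshitz, as the paper does.
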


Zeilberger~\cite[Props.~3.1 and~3.2]{Zeilberger1990} proved that
the class of holonomic functions satisfies certain closure properties,
based on certain D-module constructions. Here, we give an alternative
proof, based on Lipshitz' work on D-finite functions.
\begin{prop}\label{PROP:closure}
Let~$H_1(\vt,\vk),H_2(\vt,\vk)\in\bS$ be holonomic.
Then both $H_1 + H_2$ and $H_1H_2$ are also holonomic.
\end{prop}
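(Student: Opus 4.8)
The plan is to reduce the multivariate continuous-discrete statement to the purely continuous $D$-finiteness statement via the generating-function machinery set up in Definition~\ref{DEF:holonomic}, and then invoke the known closure properties of $D$-finite power series together with Theorem~\ref{THM:diag}. So first I would pass from $H_1,H_2\in\bS$ to their generating functions $G_1(\vt,\vz)$ and $G_2(\vt,\vz)$, which by hypothesis are holonomic with respect to $\Wtz$. The sum is the easy half: the generating function of $H_1+H_2$ is $G_1+G_2$, and since holonomicity in $\Wtz$ is equivalent (by Theorem~\ref{THM:kashi}) to $D$-finiteness over $\bF(\vt,\vz)$, it suffices to recall that a sum of two $D$-finite elements of $\bF(\vt,\vz)[[\,\cdot\,]]$ is again $D$-finite — a standard fact since the space spanned by the derivatives of $G_1+G_2$ sits inside the (finite-dimensional) sum of the spaces spanned by the derivatives of $G_1$ and of $G_2$ separately.

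The product is the substantive case, because the generating function of $H_1H_2$ is \emph{not} the product $G_1G_2$ (coefficientwise multiplication of sequences becomes a Hadamard product of series, not an ordinary product). The standard device is to introduce a second set of series variables: consider $G_1(\vt,\vz)\,G_2(\vt,\vw)$ in the doubled variable set $(\vt,\vz,\vw)$. Since each factor is $D$-finite in its own variables and they involve disjoint sets of $\vz$- and $\vw$-variables, the ordinary product $G_1(\vt,\vz)G_2(\vt,\vw)$ is $D$-finite over $\bF(\vt,\vz,\vw)$ — again because the derivative space of a product is contained in the tensor product of the two derivative spaces. Now the generating function of $H_1H_2$ is exactly the diagonal of $G_1(\vt,\vz)G_2(\vt,\vw)$ taken in the $n$ pairs $(z_j,w_j)$, $j=1,\dots,n$: extracting the coefficient of $z_1^{k_1}\cdots z_n^{k_n}w_1^{k_1}\cdots w_n^{k_n}$ and renaming the $w_j$ back to $z_j$ yields $\sum_\vk H_1(\vt,\vk)H_2(\vt,\vk)\vz^\vk$. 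By Theorem~\ref{THM:diag} this iterated diagonal of a $D$-finite series is $D$-finite, and translating back through Theorem~\ref{THM:kashi} gives holonomicity of $H_1H_2$ with respect to $\Wtz$, i.e.\ $H_1H_2$ is holonomic in the sense of Definition~\ref{DEF:holonomic}.

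The main obstacle I anticipate is purely bookkeeping rather than conceptual: one must make sure that the substitution $\vw\mapsto\vz$ followed by diagonalization is legitimate at the level of formal power series and that the $\vt$-variables (which are differentiated but never diagonalized) are carried along correctly — in particular that Theorem~\ref{THM:diag}, stated for $\bF[[z_1,\dots,z_n]]$, applies with $\bF$ replaced by the coefficient field $\bF(\vt)$ so that $D$-finiteness over $\bF(\vt,\vz,\vw)$ is preserved under each primitive diagonal $I_{z_j,w_j}$. This is exactly the setting of Lipshitz's theorem (the ground field is arbitrary of characteristic zero), so there is no real difficulty, but it is the point that needs to be stated carefully. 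A minor additional check is that the diagonal, a priori an element of $\bF(\vt)[[\vz]]$, has the same coefficient sequence as $H_1H_2$ on all of $\bN^n$, which is immediate from the coefficient formula above; this also silently uses that evaluating the $H_i$ at integer points is well-defined, as remarked after Definition~\ref{DEF:fgf}.
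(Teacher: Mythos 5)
Your proposal is correct and follows essentially the same route as the paper: pass to generating functions, use the equivalence of holonomicity and $D$-finiteness over the purely continuous variables, handle the sum by the algebra closure of $D$-finite series, and handle the product by forming $G_1G_2$ in a doubled set of series variables and taking the iterated primitive diagonals via Lipshitz's theorem (Theorem~\ref{THM:diag}). The only cosmetic difference is notation (the paper uses $\vy,\vz$ for the two variable sets where you use $\vz,\vw$), and your careful remark about the coefficient field $\bF(\vt)$ is exactly the point the paper treats implicitly.
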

\begin{proof}
Let $G_1(\vt,\vy) = \sum_{\vk\geq\vnull} H_1(\vt,\vk)\, {\vy}^{\vk}$ and
$G_2(\vt, \vz) = \sum_{\vk\geq\vnull} H_2(\vt,\vk)\, {\vz}^{\vk}$.  By
Definition~\ref{DEF:holonomic}, $G_1$ and $G_2$ are holonomic with respect to
$\bF[\vt, \vy, \vz]\langle \Dt, \Dy, \Dz\rangle$, and therefore also D-finite
since they only involve continuous variables.  The class of D-finite functions
forms an algebra over~$\bF(\vt,\vy,\vz)$~\cite[Prop.~2.3]{Lipshitz1989}, i.e.,
it is closed under addition and multiplication.  It follows that
$G_1(\vt,\vz)+G_2(\vt,\vz)$ is also D-finite, and therefore $H_1+H_2$ is
holonomic.  Similarly, $G_1(\vt,\vy)G_2(\vt,\vz)$ is D-finite; now note that
the generating function of $H_1H_2$ is equal to the diagonal of $G_1G_2$:
\begin{align*}
  \sum_{\vk \geq\vnull}H_1(\vt,\vk) \, H_2(\vt,\vk) \, {\vy}^{\vk}
  & = I_{\vy, \vz}(G_1(\vt,\vy) \, G_2(\vt,\vz)) \\
  & = I_{y_1,z_1}(\cdots I_{y_n,z_n}(G_1(\vt,\vy) \, G_2(\vt,\vz))\cdots).
\end{align*}
By Lipshitz's theorem and Definition~\ref{DEF:holonomic}, we conclude that
$H_1H_2$ is holonomic with respect to~$\vt$ and~$\vk$.
\end{proof}

In the continuous case, Zeilberger~\cite[Lemma~4.1]{Zeilberger1990} shows that
a holonomic ideal~$\mathcal{I}$ in~$\Wt$, $\vt=t_1,\dots,t_m$, possesses the
\emph{elimination property}, i.e., for any subset of $m+1$ elements among the
$2m$~generators of $\Wt$ there exists a nonzero operator in~$\mathcal{I}$ that
involves only these $m+1$ generators and is free of the remaining $m-1$
generators.  The proof is based on a simple counting argument that employs the
Bernstein dimension.  For later use, we show a similar elimination property in
the algebra $\bF[\vt,\vk]\langle\Dt,\Sk\rangle$.

\begin{prop}\label{PROP:elim}
Let~$H(\vt,\vk)\in\bS$ be holonomic with respect to~$\vt$ and~$\vk$. Then for
any~$i\in \{1, 2, \ldots, m\}$ and~$j\in \{1, 2, \ldots, n\}$, there exists a
nonzero operator $P(\vt, k_1, \ldots, k_{j-1}, k_{j+1}, \ldots, k_n, D_i,
S_j)\in \bF[\vt, \vk]\langle \Dt, \Sk \rangle$ such that~$P(H)=0$.
\end{prop}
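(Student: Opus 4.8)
The plan is to reduce the statement to a Hilbert--dimension count for the generating function $G(\vt,\vz)=\sum_{\vk\geq\vnull}H(\vt,\vk)\,\vz^{\vk}$, which is holonomic with respect to~$\Wtz$ by Definition~\ref{DEF:holonomic}. The argument is Zeilberger's counting idea, but applied to a subspace of operators tailored so that, under the generating--function correspondence, it translates back into the admissible algebra $\cA':=\bF[\vt,k_1,\dots,k_{j-1},k_{j+1},\dots,k_n]\langle D_i,S_j^{-1}\rangle$ (forward shifts being recovered at the very end). We may assume $H\neqa 0$, since otherwise $P=1$ works; then $G\neq 0$ and the cyclic module $M:=\Wtz/\operatorname{ann}_{\Wtz}(G)$ is a nonzero holonomic module of Bernstein dimension~$m+n$.

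First I would record the exact intertwining rules between operators on~$H$ and operators on~$G$. Writing $\theta_\ell:=z_\ell D_{z_\ell}$ and letting $\mathrm{GF}$ denote ``generating function of,'' one checks directly on coefficients that $\mathrm{GF}(D_i H)=D_{t_i}G$, $\mathrm{GF}(t_i H)=t_i G$, $\mathrm{GF}(k_\ell H)=\theta_\ell G$ for $\ell\neq j$, and $\mathrm{GF}(S_j^{-1}H)=z_j G$, where $S_j^{-1}$ is the backward shift with the convention that it returns~$0$ on the hyperplane $k_j=0$. These rules respect the commutation relations (the only nontrivial one being $[D_{t_i},t_i]=1\leftrightarrow[D_i,t_i]=1$), so the assignment $t_i\mapsto t_i$, $D_{t_i}\mapsto D_i$, $\theta_\ell\mapsto k_\ell\ (\ell\neq j)$, $z_j\mapsto S_j^{-1}$ extends to an algebra isomorphism $\phi$ from the subalgebra $\cB\subset\Wtz$ generated by $\{t_1,\dots,t_m,\ \theta_\ell\ (\ell\neq j),\ z_j,\ D_{t_i}\}$ onto~$\cA'$, and it satisfies $\mathrm{GF}(\phi(W)\,H)=W(G)$ for every $W\in\cB$. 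In particular $W(G)=0$ forces $\phi(W)\,H=0$, because a sequence vanishes iff its generating function does.

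Now the count. For $s\geq 0$ let $\mathcal W_s\subset\cB$ be the $\bF$-span of the monomials $\vt^{\aalpha}\big(\prod_{\ell\neq j}\theta_\ell^{\beta_\ell}\big)z_j^{\,c}D_{t_i}^{\,a}$ with $|\aalpha|+|\bbeta|+c+a\le s$. Since $\phi$ carries these to the standard basis monomials $\vt^{\aalpha}\big(\prod_{\ell\neq j}k_\ell^{\beta_\ell}\big)S_j^{-c}D_i^{a}$ of~$\cA'$, they are linearly independent, so $\dim_\bF\mathcal W_s$ equals the number of such tuples and grows like $s^{m+n+1}/(m+n+1)!$. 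On the other hand every generator of $\mathcal W_s$ has Bernstein degree~$1$ except the $\theta_\ell$, which have degree~$2$; hence each such monomial has Bernstein degree at most~$2s$, i.e.\ $\mathcal W_s\subseteq(\Wtz)_{2s}$. Therefore the image $\{W(G):W\in\mathcal W_s\}$ lies in $(\Wtz)_{2s}\cdot G$, whose dimension is the Hilbert function of~$M$ at~$2s$; since $M$ has Bernstein dimension~$m+n$, this is a polynomial in~$s$ of degree exactly~$m+n$, hence $O(s^{m+n})$. Comparing growth rates, for all large~$s$ the linear map $\mathcal W_s\to M,\ W\mapsto W(G)$, has nonzero kernel, so there is a nonzero $W\in\mathcal W_s$ with $W(G)=0$; then $\phi(W)\in\cA'$ is nonzero and annihilates~$H$. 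Finally I would pass to forward shifts: writing $\phi(W)=\sum_{c}c_c\,S_j^{-c}$ with each $c_c\in\bF[\vt,k_1,\dots,k_{j-1},k_{j+1},\dots,k_n]\langle D_i\rangle$ free of~$k_j$, and taking $N$ at least the largest~$c$ occurring, the operator $P:=S_j^{\,N}\phi(W)=\sum_c c_c\,S_j^{\,N-c}$ lies in $\bF[\vt,\vk]\langle\Dt,\Sk\rangle$, is free of~$k_j$, is nonzero, and satisfies $P(H)=S_j^{N}(\phi(W)\,H)=0$; here one uses $S_jS_j^{-1}=\mathrm{id}$ to clear the negative powers and the fact that $S_j$ commutes with the $k_j$-free coefficients and with~$D_i$.

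I expect the main obstacle to be the upper bound, namely the simultaneous requirement on the tailored filtration~$\mathcal W_s$ that it (i) map isomorphically onto a full piece of $\cA'$, forcing growth of order $s^{m+n+1}$, while (ii) still sitting inside a bounded Bernstein step of~$\Wtz$, so that holonomicity caps its image at $O(s^{m+n})$. The delicate point is that the discrete variables $k_\ell\ (\ell\neq j)$ must enter only through the Euler operators $\theta_\ell=z_\ell D_{z_\ell}$, never through $z_\ell$ or $D_{z_\ell}$ separately, which would reintroduce the forbidden shifts $S_\ell^{\pm1}$. It is exactly this restriction to the~$\theta_\ell$ that renders a naive application of Zeilberger's generator--elimination insufficient once $n\ge 3$, and that the two-sided degree comparison above is designed to circumvent.
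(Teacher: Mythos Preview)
Your argument is correct and follows the same overall strategy as the paper: pass to the generating function $G(\vt,\vz)$, obtain an annihilating operator in the variables $\vt,z_j,D_{t_i},\theta_\ell\ (\ell\neq j)$, and translate it back to an operator on~$H$ via the correspondences $z_j\leftrightarrow S_j^{-1}$ and $\theta_\ell\leftrightarrow k_\ell$, finally clearing the negative shifts by a left multiplication with a power of~$S_j$.

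The one substantive difference is in how the annihilator on the $G$-side is obtained. The paper invokes Lipshitz's Lemma~2.4 from~\cite{Lipshitz1989} as a black box to produce a nonzero $Q(\vt,z_j,D_{t_i},\Theta_\ell\,(\ell\neq j))$ with $Q(G)=0$, and then carries out the coefficient extraction explicitly on the power series. You instead inline the proof of that lemma, performing the Bernstein--dimension count directly on the tailored filtration~$\mathcal W_s$; your algebra isomorphism~$\phi$ then replaces the paper's explicit coefficient manipulation. What your route buys is self-containment (no external citation needed) and a cleaner algebraic packaging of the translation step; what the paper's route buys is brevity and a concrete verification that the resulting~$P$ really annihilates~$H$ at every~$\vk$, including the boundary terms handled by the remainder~$r(\vz)$. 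Both arguments are equally valid, and indeed Lipshitz's lemma is itself proved by precisely the counting argument you give.
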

\begin{proof}
Without loss of generality, we may assume that $i=1$ and $j=1$. Let~$G(\vt,\vz)$
be the generating function of~$H(\vt,\vk)$ and let~$\Theta_{\ell}$ denote the Euler derivation
$z_{\ell}\frac{\partial}{\partial z_{\ell}}$ for $1\leq\ell\leq n$.
By~\cite[Lemma~2.4]{Lipshitz1989}, there exists a nonzero operator
\[
  Q(\vt, z_1, D_1, \Theta_2, \ldots, \Theta_n)\in \bF[\vt, \vz]\langle \Dt, \Dz\rangle
\]
such that~$Q(G)=0$. Write
\[
  Q = \sum_{\vw\in W} q_{\vw}(\vt, D_1)z_1^{w_1}\Theta_2^{w_2}\cdots \Theta_n^{w_n},
  \quad \text{where } W\subset\bN^n \text{ and } |W|<+\infty.
\]
Set~$u_1 = \deg_{z_1}(Q) = \max\{w_1\mid (w_1,\dots,w_m) \in W\}$, and let
$W'= \{(u_1-w_1,w_2,\dots,w_m) \mid \vw  \in W\}$.
By a straightforward calculation, we have
\begin{align*}
  Q(G) & = \sum_{\vw\in W} q_{\vw}(\vt, D_1) z_1^{w_1}\Theta_2^{w_2}\cdots \Theta_n^{w_n}
         \biggl(\sum_{\vk \geq \vnull} H(\vt,\vk) \, \vz^{\vk}\biggr)\\
  & = \sum_{\vw\in W'} q_{\vw}(\vt, D_1) z_1^{u_1-w_1}\Theta_2^{w_2}\cdots \Theta_n^{w_n}
         \biggl(\sum_{\vk \geq \vnull} H(\vt,\vk) \, \vz^{\vk}\biggr)\\
       & = \sum_{\vw\in W'} \sum_{\vk\geq \vnull} q_{\vw}(\vt, D_1) \,
         H(\vt,\vk) \, k_2^{w_2}\cdots k_n^{w_n} z_1^{k_1+u_1-w_1}z_2^{k_2}\cdots z_n^{k_n}\\
       & = z_1^{u_1}\sum_{\vw\in W'} \sum_{\genfrac{}{}{0pt}{1}{k_1 \geq -w_1}{k_2,{\dots},k_n \geq 0}} \!\! q_{\vw}(\vt, D_1) \,
         H(\vt,k_1{+}w_1,k_2,{\dots},k_n) \, k_2^{w_2}{\cdots} k_n^{w_n} z_1^{k_1}{\cdots} z_n^{k_n}\\
   & =  z_1^{u_1}\sum_{\vw\in W'} \sum_{\genfrac{}{}{0pt}{1}{k_1 \geq 0}{k_2,\dots,k_n \geq 0}} q_{\vw}(\vt, D_1) \,
         (S_1^{w_1}H)(\vk) \, k_2^{w_2}\cdots k_n^{w_n} z_1^{k_1}\cdots z_n^{k_n}\\
   &\quad \quad +z_1^{u_1}\sum_{\vw\in W'} \sum_{\genfrac{}{}{0pt}{1}{-w_1\leq k_1 < 0}{k_2,\dots,k_n \geq 0}} q_{\vw}(\vt, D_1) \,
         (S_1^{w_1}H)(\vk) \, k_2^{w_2}\cdots k_n^{w_n} z_1^{k_1}\cdots z_n^{k_n}\\
   & = z_1^{u_1} \left( \sum_{\genfrac{}{}{0pt}{1}{k_1\geq 0}{k_2,\dots,k_n \geq 0}}
           (PH)(\vk) \, z_1^{k_1}\cdots z_n^{k_n} \right) + r(\vz) = 0 \\
         \end{align*}
where $P$ is the desired operator
\[
  P = \sum_{\vw\in W'} q_{\vw}(\vt, D_1) k_2^{w_2} \cdots k_n^{w_n} S_1^{w_1}
\]
and $r(\vz)$ is a polynomial in $z_1$ of degree less than~$u_1$ with
coefficients being power series in $z_2,\dots,z_n$.  Recalling that the
extreme left member $Q(G)$ of the equality above is $0$ and noting that $r$
and the sum in the extreme right member of the equality have no powers of
$z_1$ in common and hence, no monomials $z_1^{k_1}\cdots z_n^{k_n}$ in common,
coefficient comparison with respect to $z_1^{k_1}\cdots z_n^{k_n}$ reveals
that $P(H)=0$ and $r=0$.
\end{proof}

\section{Proof of the Conjecture}
In the case of several discrete variables, piecewise and conjugate
interpretations of Conjecture~\ref{CONJ:wz} were proved
by Payne~\cite{Payne1997} and by Abramov and
Petkov\v{s}ek~\cite{AbramovPetkovsek2002a}, respectively.
In the continuous case, any multivariate hypergeometric term is
D-finite, and therefore holonomic by the Bernstein--Kashiwara equivalence. By
Proposition~\ref{PROP:proper}, it is also conjugate-proper.
Thus, Wilf and Zeilberger's
conjecture holds naturally in this case. It remains to prove that the
conjecture also holds in a mixed setting with several continuous and discrete
variables; this is done in the rest of this section. We start by proving one
direction of the equivalence in Wilf and Zeilberger's conjecture, namely that
properness implies holonomicity.

\begin{prop}\label{PROP:ptoh}
Any proper hypergeometric term over~$\bF(\vt, \vk)$ is holonomic.
Any conjugate-proper hypergeometric term over~$\bF(\vt, \vk)$ is conjugate
to a holonomic one.
\end{prop}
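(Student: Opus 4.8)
The plan is to reduce Proposition~\ref{PROP:ptoh} to the closure of holonomicity under products (Proposition~\ref{PROP:closure}) together with the already-known discrete case of the conjecture. First note that the second assertion follows at once from the first: if~$H$ is conjugate-proper, it is conjugate to some proper term~$H'$, and once~$H'$ is shown holonomic we are done. So the real content is the first assertion.

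To prove it, I would write an arbitrary proper term in the explicit product form of Definition~\ref{DEF:proper}, namely as
\[
  H = p(\vt,\vk)\cdot\exp\!\bigl(g_0(\vt)\bigr)\cdot\Bigl(\prod_{\ell=1}^L g_{\ell}(\vt)^{\gamma_{\ell}}\Bigr)\cdot\Bigl(\prod_{j=1}^n h_j(\vt)^{k_j}\Bigr)\cdot T(\vk)
\]
in~$\bS$, and then invoke Proposition~\ref{PROP:closure} repeatedly: it is enough to show that each of the five factors is holonomic with respect to~$\vt$ and~$\vk$. By the Bernstein--Kashiwara equivalence (Theorem~\ref{THM:kashi}) this amounts to showing that the generating function of each factor is D-finite over the Ore algebra~$\Otz$.

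For the first four factors this is a routine generating-function computation. The generating function of the polynomial~$p$ is $\sum_{\vk\geq\vnull}p(\vt,\vk)\vz^{\vk}$, which lies in~$\bF(\vt,\vz)$ since each series $\sum_{k\geq0}k^{a}z^{k}$ is a rational function of~$z$, and rational functions are D-finite. The factors $\exp(g_0(\vt))$ and $\prod_{\ell}g_{\ell}(\vt)^{\gamma_{\ell}}$ are constant in~$\vk$ (their $k_j$-certificates are~$1$ by Table~\ref{TAB:trans}), so their generating functions are, up to the scalar ambiguity of Remark~\ref{remark:exponentiation}, a D-finite function of~$\vt$ times $\prod_j(1-z_j)^{-1}$; here $\exp(g_0)$ and each $g_{\ell}^{\gamma_{\ell}}$ are D-finite in~$\vt$ because the $\bF(\vt)$-span of all their $\vt$-derivatives is one-dimensional, a finite product of D-finite functions is D-finite, and multiplication by a rational function of~$\vz$ preserves D-finiteness. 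Finally, since $k_j\in\bN$, the product $\prod_j h_j(\vt)^{k_j}$ is a genuine rational function for every~$\vk$, with generating function $\prod_j(1-h_j(\vt)z_j)^{-1}\in\bF(\vt,\vz)$.

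The only remaining --- and the only non-elementary --- point is the factorial term~$T(\vk)$. Since it involves only the discrete variables, its generating function lies in~$\bF[[\vz]]$, is annihilated by every~$D_i$, and hence is D-finite over~$\Otz$ if and only if it is D-finite over $\bF(\vz)\langle\Dz\rangle$; thus holonomicity of~$T$ in the sense of Definition~\ref{DEF:holonomic} coincides with ordinary discrete holonomicity. I would then invoke the ``easy'' half of the discrete Wilf--Zeilberger conjecture --- that a proper discrete hypergeometric term, in particular a factorial term, is holonomic --- which goes back to Wilf and Zeilberger~\cite{Wilf1992} and is contained in Payne~\cite{Payne1997} and in Abramov and~\Petkovsek~\cite{AbramovPetkovsek2002a}. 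Reassembling the factors by Proposition~\ref{PROP:closure} then gives that~$H$ is holonomic, and the second assertion follows as noted. The main obstacle, such as it is, is precisely this last ingredient: every factor except~$T(\vk)$ has a generating function that is manifestly rational or a rational multiple of~$\exp(g_0)$, whereas the holonomicity of a general multivariate factorial term must be imported from the discrete theory.
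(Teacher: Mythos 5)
Your proposal is correct and follows essentially the same route as the paper's proof: reduce via the product closure property (Proposition~\ref{PROP:closure}) to the individual factors of~\eqref{EQ:proper}, verify that the polynomial, the purely continuous factors, and $\prod_j h_j(\vt)^{k_j}$ have rational or first-order-D-finite generating functions, and import the holonomicity of the factorial term from the discrete theory of Abramov and \Petkovsek. Your explicit remark that D-finiteness of the $\vt$-free generating function of~$T(\vk)$ over $\bF(\vz)\langle\Dz\rangle$ suffices for holonomicity with respect to $\Wtz$ is a small elaboration the paper leaves implicit, but not a different argument.
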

\begin{proof}
By Definition~\ref{DEF:proper} and Proposition~\ref{PROP:closure}, it suffices
to show that all factors in the multiplicative form~\eqref{EQ:proper} are
holonomic with respect to~$\vt$ and~$\vk$. First, we see that
\[
  \sum_{k \in \bN} k(k-1)\cdots (k-i+1) z^{k-i} = D_z^i \left( \frac{1}{1-z}\right),
\]
which is a rational function in~$z$ for each fixed~$i\in \bN$, obtained by
taking the $i$th derivative on both sides of~$\sum_{k \in \bN} z^k = 1/(1-z)$.
This fact implies that the generating function of any polynomial in~$\bF[\vt, \vk]$ is
a rational function in~$\bF(\vt, \vz)$ and therefore is holonomic.

Second, any hypergeometric term~$H(\vt)$ that depends only on the
continuous variables~$\vt$ is holonomic. According to
Definition~\ref{DEF:holonomic}, its generating function is $G(\vt, \vz) =
H(\vt) \prod_{i=1}^n (1/(1-z_i))$. Clearly $G(\vt,\vz)$ satisfies a system of
first-order linear differential equations and therefore is D-finite with
respect to $\Otz=\bF(\vt,\vz)\langle\Dt,\Dz\rangle$.  By
Theorem~\ref{THM:kashi}, the generating function~$G$ is holonomic with respect
to~$\Wtz$, and thus $H$ is holonomic with respect to $\vt$ and~$\vk$.
In particular, the factor
$\exp(g_0(\vt))\prod_{\ell=1}^Lg_{\ell}(\vt)^{\gamma_{\ell}}$
in~\eqref{EQ:proper} is holonomic.

Third, a direct calculation implies that the generating function of the factor $\prod_{j=1}^n h_j(\vt)^{k_j}$
is equal to~$\prod_{j=1}^n 1/(1-h_j(\vt)z_j)$, which is holonomic by a similar reasoning.

Finally, we have to show that the factorial term $T(\vk)$ is holonomic.  For
this, we point to \cite[Def.~6 and Thm.~3]{AbramovPetkovsek2002a}:
there, a \emph{proper term} is defined as the product of a polynomial
in~$\bF[\vk]$ and a factorial term~$T(\vk)$, and subsequently, it is shown
that every proper term is holonomic.

The second assertion now follows from
the symmetry of the conjugacy relation.
\end{proof}

The following proposition characterizes those rational functions in continuous
and discrete variables which are holonomic.
\begin{prop}\label{PROP:rat}
Let~$f(\vt, \vk)\in\bF(\vt,\vk)$ be a rational function and $F(\vt,\vk)\in\bS$ be
the corresponding rational sequence. The following statements are equivalent:
\renewcommand{\theenumi}{(\roman{enumi})}
\renewcommand{\labelenumi}{\theenumi}
\begin{enumerate}
\item\label{item.holo} $F$ is conjugate to a holonomic term.
\item\label{item.proper} $F$ is conjugate-proper.
\item\label{item.den} The denominator of~$f$ splits into the form
\[
  g(\vt)\prod_{i=1}^I (\alpha_i + \vv_i \cdot \vk)
\]
where~$g\in \bF[\vt]$, and $\alpha_i\in {\bF}$ and $\vv_i\in \bZ^n$ for $1\leq i \leq I$.
\end{enumerate}
\end{prop}
\begin{proof}
We first prove that (iii) implies (ii) and that (ii) implies (i).
Assume that the denominator of~$f$ has the form prescribed in~\ref{item.den}.
According to Remark~\ref{RE:standard} there is a proper term of the
form~\eqref{EQ:proper} that is conjugate to~$F$. Hence we have
shown~\ref{item.proper}. Now by Proposition~\ref{PROP:ptoh} we get that $F$ is
conjugate to a holonomic term, which is part~\ref{item.holo} of the assertion.

It remains to show that \ref{item.holo} implies \ref{item.den}. For this
purpose, assume that~$F$ is conjugate to a holonomic term. The rest of the
proof is divided into two parts: first it is proved that the denominator
of~$f$ splits into $g(\vt)h(\vk)$ and then it is argued that $h(\vk)$ factors
into integer-linear factors.

We may assume that $f$ is not a polynomial, otherwise the statement is
trivially true.  Let $p,d,s\in\bF[\vt,\vk]$ such that $f=p/(ds)$,
$\gcd(p,ds)=1$, and $d$ is irreducible.  We will show that~$d$ is either free
of~$\vt$ or free of~$\vk$. Suppose to the contrary that $d$ depends on both
continuous and discrete variables; without loss of generality, assume that~$d$
is neither free of~$t_1$ nor of~$k_1$. Let~$\overline{\vk}$ denote $(k_2, k_3,
\ldots, k_n)$.  Performing a pseudo-division of~$p$ by~$d$ with respect
to~$k_1$, one obtains~$e\in \bF[\vt, \overline{\vk}]$ and $q, r\in \bF[\vt,
  \vk]$ with~$\deg_{k_1}(r)< \deg_{k_1}(d)$ such that~$ep=qd+r$; note that
$r\neq0$ since $p$ and $d$ are coprime.

By Proposition~\ref{PROP:ptoh}, the embeddings of the polynomials $e$ and~$s$
into~$\bS$ are holonomic. Then the rational sequence~$R$ that corresponds to
$efs$ is conjugate to some holonomic term~$H$ by the product closure property
(Proposition~\ref{PROP:closure}), since $F$ is conjugate to a holonomic term
by assumption.

Proposition~\ref{PROP:elim} states that there exists a nonzero operator~$P$
in~$\bF[\vt, \overline{\vk}]\langle D_1, S_1\rangle$ such that~$P(H)=0$.  It
follows that $P(R) \eqa 0$, by the same argument as in the proof of
\cite[Thm.~13]{AbramovPetkovsek2002a}.  Then we have that the operator~$P$
also annihilates the rational function $efs = ep/d = q+r/d$ in $\bF(\vt,\vk)$,
since a rational function that is zero on a non-algebraic set (i.e., ``almost
everywhere''), must be identically zero.  Write
\[
  P = \sum_{i\geq0} \sum_{j\geq0} c_{i,j}(\vt,\overline{\vk}) D_1^i S_1^j
\]
where only finitely many $c_{i,j}$ are nonzero.  Since~$d$ is irreducible
and not free of~$k_1$, it is easy to see that~$S_1^j(d)$ is also irreducible
for all~$j\in\bN$ and that $\gcd\bigl(S_1^i(d), S_1^j(d)\bigr)=1$ when~$i\neq
j$.  By induction on~$i$ and noting that~$d$ is not free of~$t_1$, we have
\[
  D_1^i S_1^j \left(\frac{r}{d}\right) = \frac{r_{i,j}}{\bigl(S_1^j(d)\bigr)^{i+1}}
\]
for some polynomials $r_{i,j}\in\bF[\vt,\vk]$ for which
$\gcd\bigl(r_{i,j}, S_1^j(d)\bigr)=1$ over~$\bF(\vt, \overline{\vk})$.
Now let $j'$ be such that not all $c_{i,j'}$ are zero and choose $i'$ to be
the largest integer such that $c_{i',j'}\neq0$. Then, in the expression
\[
  P(efs) = P(q) + \sum_{i\geq0} \sum_{j\geq0} \frac{c_{i,j} r_{i,j}}{\bigl(S_1^j(d)\bigr)^{i+1}}
\]
we have a pole at $S_1^{j'}(d)$ of order $i'+1$, but this pole cannot be canceled
with any other term of $P(efs)$. This contradicts the assumption that $P$
annihilates $efs$.  Thus any irreducible factor in the denominator of~$f$ is
free of~$\vt$ or free of~$\vk$. It follows that the denominator of~$f$ can be
written as $g(\vt)h(\vk)$ with $g\in\bF[\vt]$ and $h\in\bF[\vk]$.

It remains to show that~$h(\vk)$ is a product of integer-linear factors of the
form~$\alpha + \vv \cdot \vk$.  Multiplying~$f$ by~$g$ and noting that~$g$ is
holonomic, we get that $fg=p/h$ is holonomic. We remark that a holonomic term
in $\vt$ and $\vk$ is also holonomic when viewed as a term in $\vk$ alone with
parameters~$\vt$. Then, Theorem 13 in~\cite{AbramovPetkovsek2002a} or Lemma
4.1.6. in~\cite{Payne1997} implies that $h$ factors into integer-linear
factors by regarding~$p/h$ as a holonomic term of~$\vk$ alone.
\end{proof}

We are now ready to state the main result of this paper.
\begin{theorem}
\label{THM:main}
A hypergeometric term is conjugate-proper if and only if it is conjugate to a holonomic one.
\end{theorem}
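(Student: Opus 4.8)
The plan is to prove each direction separately. The forward direction ($\Rightarrow$) is essentially already done: if $H$ is conjugate-proper, then by definition it is conjugate to a proper term $H'$, and Proposition~\ref{PROP:ptoh} tells us that $H'$ is holonomic; hence $H$ is conjugate to a holonomic term. So the real content is the reverse direction: if $H$ is conjugate to a holonomic hypergeometric term, then $H$ is conjugate-proper. Since conjugacy is an equivalence relation preserving certificates, it suffices to assume $H$ itself is holonomic and to show $H$ is conjugate-proper.

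First I would invoke Theorem~\ref{THM:structure} to write $H$ (up to conjugacy, hence without loss of generality) in the standard form of Remark~\ref{RE:standard},
\[
  H \eqa F(\vt,\vk)\,\exp\bigl(g_0(\vt)\bigr)\Biggl(\prod_{\ell=1}^L g_\ell(\vt)^{\gamma_\ell}\Biggr)\Biggl(\prod_{j=1}^n h_j(\vt)^{k_j}\Biggr)T(\vk),
\]
where $F$ corresponds to a rational function $f\in\bF(\vt,\vk)$ whose denominator has no factors purely in $\bF[\vt]$ and no integer-linear factors. The goal then reduces to showing that this forces $f$ to be a polynomial, i.e. that $F$ itself is (conjugate to) a polynomial times a factorial term; once $f\in\bF[\vt,\vk]$ the term $H$ is literally of the proper form~\eqref{EQ:proper}, so $H$ is proper and a fortiori conjugate-proper.

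The key step is to isolate the rational factor $F$ from the transcendental and factorial factors and show that $F$ alone must be holonomic, so that Proposition~\ref{PROP:rat} applies. The other four factors — $\exp(g_0)$, $\prod g_\ell^{\gamma_\ell}$, $\prod h_j^{k_j}$, and $T(\vk)$ — are all holonomic by the arguments inside the proof of Proposition~\ref{PROP:ptoh}, and all of them are invertible (the transcendental factors are nowhere zero as solutions of first-order ODEs, and $T$ can be taken with nonvanishing rising factorials $(\alpha)^*$, which is exactly why those were introduced). Dividing the holonomic term $H$ by these invertible holonomic factors and using the product closure property (Proposition~\ref{PROP:closure}) applied to the inverses, one concludes that $F$ is holonomic. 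Then Proposition~\ref{PROP:rat} says the denominator of $f$ splits as $g(\vt)\prod_i(\alpha_i+\vv_i\cdot\vk)$; but by the standardness assumption the denominator has no such factors, so it is a nonzero constant and $f\in\bF[\vt,\vk]$, as desired.

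The main obstacle is the division step: one must be careful that the reciprocals of the factors $\exp(g_0)$, $g_\ell^{\gamma_\ell}$, $h_j(\vt)^{k_j}$ and $T(\vk)$ are themselves hypergeometric terms in $\bS$ that are holonomic — the transcendental ones because $\exp(-g_0)$ and $g_\ell^{-\gamma_\ell}$ and $h_j^{-k_j}$ are again of the types covered by Proposition~\ref{PROP:ptoh}, and $T(\vk)^{-1}$ because the reciprocal of a nonvanishing factorial term is again a factorial term (replace each $(\alpha_i)^*_{\vv_i\cdot\vk}$ in the numerator by one in the denominator and vice versa, and invert the $\mu_j$). One also needs that multiplying by these invertible factors is compatible with the congruence $\eqa$, which is guaranteed since $\eqa$ is a congruence on $\bS$. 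Modulo this bookkeeping, the result follows by assembling Theorems~\ref{THM:structure}, Propositions~\ref{PROP:closure}, \ref{PROP:ptoh}, \ref{PROP:rat}, and the definition of conjugacy.
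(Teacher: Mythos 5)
Your proposal is correct and follows essentially the same route as the paper: the forward direction via Proposition~\ref{PROP:ptoh}, and the reverse direction by decomposing via Theorem~\ref{THM:structure}, stripping off the invertible holonomic factors (whose reciprocals are handled exactly as you describe) using Proposition~\ref{PROP:closure}, and then applying Proposition~\ref{PROP:rat} to the remaining rational sequence. Your extra observation that the standard form of Remark~\ref{RE:standard} forces $f$ to be a polynomial is just a slightly more explicit phrasing of the paper's appeal to Proposition~\ref{PROP:rat} and Remark~\ref{RE:standard}.
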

\begin{proof}
In Proposition~\ref{PROP:ptoh} it was proved that any conjugate-proper
hypergeometric term is conjugate to a holonomic one. For the other direction,
recall that Theorem~\ref{THM:structure} implies that any hypergeometric term
is conjugate to a product of a rational sequence, an exponential function, a
factorial term, and several power functions. Also in
Proposition~\ref{PROP:ptoh} it was proved that all factors in the
multiplicative form~\eqref{EQ:mf} are holonomic, except possibly
the first one, the rational sequence~$F(\vt,\vk)$.
Similarly, the reciprocals of these factors are holonomic as well.
By the product closure
property given in Proposition~\ref{PROP:closure}, we are reduced to show that
any rational sequence~$F(\vt,\vk)$ that is conjugate to a holonomic one,
is conjugate-proper. The proof is concluded by invoking Proposition~\ref{PROP:rat}.
\end{proof}

\section{Conclusion}
In this paper we have formulated and proved Wilf and Zeilberger's conjecture
in the mixed continuous-discrete setting, using the notion of conjugate
hypergeometric terms.
In the discrete setting, an alternative formulation by Payne~\cite{Payne1997}
using piecewise hypergeometric terms was employed to prove the following
version of the conjecture: a hypergeometric term is holonomic if and only if
it is piecewise-proper. It would be interesting to investigate the mixed
setting in this framework and compare it with the previous one.

As there exists a notion of $q$-proper-hypergeometric
terms~\cite[p. 589]{Wilf1992}, it would be natural to formulate and prove a
more general version of Theorem~\ref{THM:main} that also includes the
$q$-case. One important ingredient, namely the structure theorem for
compatible rational functions in all three types of variables (continuous,
discrete, and $q$-discrete) is already available~\cite{Chen2011}.

\section*{Acknowledgment}

We would like to thank Garth Payne for many fruitful discussions that helped
us to acquire a better understanding of some subtleties of hypergeometric
terms and holonomic systems. He brought to our attention several important
details in the definition of generating functions and exponentiation (see e.g.
Remark~\ref{remark:exponentiation}); we also acknowledge that the proof of
Proposition~\ref{PROP:rat} is based on some of his ideas. We are grateful to
Ruyong Feng, Ziming Li, Josef Schicho, and Michael Singer for helpful
discussions on the topic. Last but not least we want to thank the three
anonymous referees who did a particular careful reviewing and helped to
improve the exposition significantly.

\bibliographystyle{elsarticle-harv}

\end{document}